
\documentclass[leqno, final]{siamltex}

\usepackage{amsmath,  amssymb, enumerate, xcolor, color, bm}
\usepackage{ graphicx, float, wasysym, mathrsfs}
\usepackage{stmaryrd}
\usepackage{xfrac, bbm}
\usepackage{booktabs, cite}
\usepackage{subcaption}
\usepackage{mathtools}
\usepackage{caption}

\usepackage{aicescover}


\def\ds{\displaystyle}

\newcommand{\Ccinf}{\mathcal{C}_c^\infty}
\newcommand{\Lp}[1]{{L_{#1}}}
\newcommand{\Ltwo}{\Lp{2}}
\newcommand{\Linf}{\Lp{\infty}}
\newcommand{\Wmp}[2]{W^{#1}_{#2}} 
\newcommand{\Hm}[1]{H^#1} 
\newcommand{\Hone}{\Hm{1}} 
\newcommand{\normgen}[1]{\Vert #1 \Vert}	
\newcommand{\Hnorm}[2]{\normgen{#1}_{\Hone(#2)}}	
\newcommand{\Ltwonorm}[2]{\normgen{#1}_{\Ltwo({#2})}}	
\newcommand{\Linfnorm}[2]{\normgen{#1}_{\Lp{\infty}({#2})}}	
\newcommand{\Winfnorm}[2]{\normgen{#1}_{\Wmp{1}{\infty}({#2})}}	

\newcommand{\mbf}[1]{{\bm{{#1}}}}
\newcommand{\uu}{\mbf{u}}
\newcommand{\vv}{\mbf{v}}
\newcommand{\ww}{\mbf{w}}	
\newcommand{\ff}{\mbf{f}}	
\newcommand{\phiv}{{\bm{\varphi}}}	

\newcommand{\uh}{\uu^h}
\newcommand{\vh}{\vv^h}
\newcommand{\wh}{\ww^h}
\newcommand{\phivh}{\phiv^h}

\newcommand{\df}[1]{\,\mathrm{d}#1}
\newcommand{\dx}{\df{x}}
\newcommand{\dss}{\df{s}}
\newcommand{\dt}{\df{t}}

\newcommand{\mv}[2]{\langle{#1},{#2}\rangle_E}
\newcommand{\inprod}[2]{\langle {#1}, {#2} \rangle}

\newcommand{\fl}{\hat{\ff}(\vh)}
\newcommand{\flent}{\ff^\star(\vh)}
\newcommand{\diff}{D}

\newcommand{\el}{\kappa}
\newcommand{\sumk}{\sum_{\el, n}}
\newcommand{\defeq}{\vcentcolon =}

\newcommand{\gradb}{\overline{\nabla \vv}_\el}
\newcommand{\Res}{\mathrm{Res}}
\newcommand{\Resb}{	\overline{\Res}_{\el}}
\newcommand{\BResb}{\overline{\mathrm{BRes}}_{\el}}
\newcommand{\BDG}{\mathcal{B}_{DG}}
\newcommand{\BSC}{\mathcal{B}_{SC}}
\newcommand{\epsk}{\varepsilon_\el}
\newcommand{\dom}{\Omega}
\newcommand{\domt}{\dom_T}

\newcommand{\jump}[1]{\llbracket #1 \rrbracket^+_-}
\newcommand{\proj}{\Pi}
\newcommand{\err}[1]{e_\proj( #1)}
\newcommand{\cnsserr}{e^\proj_{\vh}}

\newcommand{\findimn}{\mathcal{V}_n^q}
\newcommand{\findim}{\mathcal{V}^q}


\title{On the convergence of a shock capturing discontinuous Galerkin method for nonlinear hyperbolic systems of  conservation laws }


\author{Mohammad Zakerzadeh \footnotemark[2]
        \and Georg May \footnotemark[2] }

\begin{document}
\aicescoverpage
\maketitle
\renewcommand{\thefootnote}{\fnsymbol{footnote}}
\footnotetext[2]{Aachen Institute for Advanced Study in Computational Engineering Science (AICES), 
RWTH Aachen, 52062, Aachen, Germany
 ({\tt \{zakerzadeh, may\}@aices.rwth-aachen.de}).}

\begin{abstract}
In this paper, we present a  shock capturing discontinuous Galerkin (SC-DG) method for nonlinear systems of conservation laws in several space dimensions and analyze its stability and convergence. The scheme is realized as a  space-time formulation in terms of entropy variables using an entropy stable numerical flux. While being similar to the method proposed in \cite{hiltebrand2014entropy}, our approach is new in that we do not use streamline diffusion (SD) stabilization.  It is proved that  an artificial viscosity-based nonlinear shock capturing mechanism is sufficient  to ensure  both entropy stability and  entropy consistency, and consequently we establish convergence to an entropy measure-valued (emv) solution. 
 The result is valid for general systems and arbitrary order discontinuous Galerkin method. 
\end{abstract}

\begin{keywords} 
Conservation Laws, Discontinuous Galerkin, Shock Capturing, Entropy Measure-Valued Solution, Convergence Analysis  
\end{keywords}

\begin{AMS}
35L65, 	65M60, 65M12
\end{AMS}

\pagestyle{myheadings}
\thispagestyle{plain}
\markboth{M. ZAKERZADEH AND G. MAY}{Convergence of SC-DG method for hyperbolic systems}

\section{Introduction}
The class of nonlinear systems of conservation laws contains many important examples, such as the  Euler equations and the  Navier-Stokes equations. 
The general form of a nonlinear $m$-system of conservation laws in several space dimensions is
\begin{equation}\label{Eq::CL}
\begin{cases} \uu_{t}+\sum^{d}\limits_{k=1} \ff^k(\uu)_{x_k} = 0, \\
\uu(x, 0) = \uu_0(x),
\end{cases}  
\end{equation}
where the unknowns $\uu = \uu(x,t) \colon \mathbb{R}^d \times {[0, \infty)} \to \mathbb{R}^m$ are the \emph{conserved variable}s and  $\ff^k\colon \mathbb{R}^m \to \mathbb{R}^m, \, k=1, \dotsc, d$ are (nonlinear) smooth \emph{flux function}s with $d = 1,2, 3$. 

The initial condition $\uu_0(x)$ is assumed to have compact support to avoid technicalities arising from boundary conditions. Using this assumption together with finite speed of propagation in hyperbolic problems, one may assume that the solution $\uu(x,t)$ has compact support for any finite time $t$ and vanishes for $\vert x \vert $ large. 

It is well-known that \eqref{Eq::CL} can produce shocks and discontinuities in finite time; hence the solution cannot be interpreted in the classical sense. This motivates one to introduce the concept of \emph{weak solution} which is defined as a bounded function $\uu$ that satisfies \eqref{Eq::CL} in distributional sense, i.e.\ 
\begin{equation}\label{Eq::weaksol}
\int_0^\infty\! \int_{\mathbb{R}^d} \! 
\inprod{\uu}{\phiv_t}
 + \sum_{k=1}^d \inprod{\ff^k(\uu)}{  \phiv_{x_k}}  \dx \dt + \int_{\mathbb{R}^d} \! \inprod{\uu_0(x)}{\phiv(x, 0)} \dx = 0,
\end{equation}
for all functions $ \phiv \in (\Ccinf(\mathbb{R}^d \times [0, \infty))\,)^m$.
Here the notation  $\inprod{\uu}{\ww}$  denotes the inner product between vectors $\uu$ and $\vv$ in the state space $\mathbb{R}^m$. Also we will use the notation $\mbf{a} \cdot \mbf{b}$ as the notation for the inner product of vectors $\mbf{a}$ and $\mbf{b}$ in the physical space $\mathbb{R}^d$. 

In order to single out the physically admissible solutions we require solution $\uu$ to satisfy
the \emph{entropy inequality condition} 
\begin{equation}\label{Eq::ent-cond}
U(\uu)_t + \sum_{k=1}^d F^k(\uu)_{x_k} \leq 0,
\end{equation}
in the distributional sense for all entropy flux $U(\uu)\colon \mathbb{R}^m \to \mathbb{R}$ and associated entropy flux functions $F^k(\uu) \colon \mathbb{R}^m \to \nobreak \mathbb{R}$ for $k=1, \dots, d$.  Here $U$ is convex and $(U, F)$ satisfy the compatibility condition
$\partial_{\uu} F^k(\uu) = \partial_\uu U(\uu) \partial_\uu \ff^k(\uu)$. 
By defining \emph{entropy variables} as $\vv = (U_\uu)^T$ one can recast \eqref{Eq::CL} in symmetric form as 
\begin{equation*}
\mbf{u_v}\vv_t + \sum_{k=1}^{d} \ff^k_{\vv} \vv_{x_k} = 0,
\end{equation*} 
such that the matrix $\uu_\vv$ is symmetric positive definite and the matrices $\ff_{\vv}^k$ are symmetric. 

In general, the best a priori estimate one can get for the solutions of \eqref{Eq::CL} is the so-called \emph{entropy stability}. This originates from  the entropy inequality condition \eqref{Eq::ent-cond} by  integrating it over the spatial domain and considering an arbitrary time $T$  combined with compact support assumption which lead to the following \emph{global entropy inequality}
\begin{equation}\label{Eq::entr-glob-semi-dis-ineq}
\frac{\mathrm{d}}{\dt} \int_{\mathbb{R}^d} \! U(\uu) \dx \leq 0 \Longrightarrow \int_{\mathbb{R}^d} \! U(\uu(x, T)) \dx \leq \int_{\mathbb{R}^d} \! U(\uu(x,0)) \dx.
\end{equation}
This property can be viewed as the nonlinear extension of $\Ltwo$ stability for systems of conservation laws and is desirable to be kept for the approximate solution $\uh$ as well.  This is the motivation behind  \emph{entropy stable schemes}, which were originally  introduced  by Tadmor~\cite{tadmor1987numerical}. 
In a  finite volume framework, these methods  have been extended to higher order Essentially Non-Oscillatory (ENO) schemes very recently \cite{ulrikphd,fjordholm2012arbitrarily}.	
In the finite element context, in \cite{hughes1986new} entropy stability is constructed by adding streamline diffusion (SD) in space-time formulation. Later formulations with streamline diffusion and with/without shock capturing (SC) term are introduced in \cite{ johnson1987convergence,johnson1990convergence,  szepessy1989convergence,szepessy1991convergence}. The extension to DG methods is presented in \cite{jaffre1995convergence}.

The above-mentioned methods are designed to satisfy the entropy stability condition; however this is not sufficient to conclude any sort of convergence for the numerical scheme in the general case due to lack of enough a priori information on the solution. Trying to obtain some sort of convergence leads to  an even weaker  notion of  solution,  the so-called \emph{entropy measure-valued}  (emv) solutions. These types of solutions, introduced by DiPerna~\cite{diperna1985measure}, are more general than weak solutions and permit a meaningful convergence theory for numerical schemes approximating \eqref{Eq::CL}. We discuss  this concept later in \S \ref{Sec::mv}.

For scalar equations, the emv solution  contains the entropy weak solution as a special case (when the initial data is a Dirac measure, see DiPerna \cite{diperna1985measure}). Using this theory,  convergence to entropy weak solutions of scalar conservation laws  has  been  established for both continuous and discontinuous streamline diffusion finite element methods \cite{jaffre1995convergence, johnson1987convergence, johnson1990convergence, szepessy1989convergence, szepessy1991convergence}.
In  the case of systems, convergence to an  emv solution has been proved very recently in \cite{ulrikphd} for TeCNO schemes in the finite volume context and in \cite{hiltebrand2014entropy} for an SCSD discontinuous Galerkin (SCSD-DG) method. 

On the other hand, despite the apparent need to include SD terms to control the residual in these schemes,  ideas questioning  the necessity and even adequacy of linear stabilization (e.g.\, streamline diffusion) have gained momentum \cite{nazarov2013convergence,ern2013weighting, guermond2010entropy, guermond2008entropy}. Furthermore, while SD stabilization is often included in the analysis of DG schemes, it is not commonly found in practical implementations. (There is a plethora of examples, e.g., \cite{hartmann2006adaptive, persson2006sub, burgess2012hp}.) Recently, Nazarov in \cite{nazarov2013convergence} suggested a stripped-down version of the SCSD continuous Galerkin method of \cite{johnson1990convergence} for scalar equations using linear (continuous) finite elements. The formulation of \cite{nazarov2013convergence} disregards the SD term and utilizes a residual based shock capturing as the only stabilization mechanism while it is proved that the approximate solution still  converges to the entropy weak solution.

In the present paper we propose a class of DG schemes for~\eqref{Eq::CL}, using only a suitable nonlinear shock-capturing term for stabilization. We will show that our method is entropy stable and satisfies the global  entropy inequality~\eqref{Eq::entr-glob-semi-dis-ineq}. The main goal of this paper is to prove that
uniform $\Linf$ bounded solutions of the suggested scheme converge to an entropy measure-valued solution of~\eqref{Eq::CL} for  arbitrary (fixed) order of polynomial approximation.  

The framework presented in \cite{hiltebrand2014entropy}, where convergence of a SCSD-DG method was proved, is the skeleton of this work.  In the present paper we extend the result of \cite{hiltebrand2014entropy} not only by proving that we can obtain adequate residual control without using  streamline-diffusion stabilization, but we also use refined estimates, resulting in a shock capturing operator using nonlinear viscosity that is higher order small compared to  \cite{hiltebrand2014entropy}. This results in a  less diffusive method.
 
Section \ref{Sec::mv} gives a brief review  
on Young measures and mv solutions which will later be used in the convergence proof. 
The space-time DG  framework is introduced in \S \ref{Chap::DGform}. This section also includes the explicit forms of the numerical diffusion and shock capturing operators. 
In \S \ref{chap::stab} the fully discrete entropy inequality  and a BV-estimate are obtained and  \S\ref{sect::conv} includes the proof of convergence to an entropy measure-valued solution. Furthermore, in \S\ref{sec::numeric} we provide some numerical examples to show the applicability of the method. Appendix \ref{Sec::App} contains the proof of Lemmas \ref{Lem-app-ineq} and \ref{Lem-app-bnd-ineq}.

\section{Entropy measure-valued solutions}
\label{Sec::mv}
The notion of measure-valued solution is a generalization of the standard  distributional (weak) solution of \eqref{Eq::CL}.
We follow \cite{diperna1985measure} and define a measure-valued solution of \eqref{Eq::CL} as  a measurable map $\bm{\mu}$ from the physical domain $\mathbb{R}^d \times \mathbb{R}_+$ to the space of non-negative measures  with unit mass over the state domain $\mathbb{R}^m$,
\begin{equation*}
\bm{\mu} \colon y=(x,t) \in (\mathbb{R}^d \times \mathbb{R}_+) \mapsto \bm{\mu}_y \in \mathrm{Prob}(\mathbb{R}^m),
\end{equation*}
which satisfies \eqref{Eq::CL} in the following sense
\begin{equation} \label{Eq-mvsol}
\int_{\mathbb{R}^d} \! \int_{\mathbb{R}_+}\! \inprod{
\mv{\bm{\sigma}}{\bm{\mu}_y}}{\phiv_t}  + \sum^d_{k=1} \inprod{
\mv{\ff^k(\bm{\sigma})}{\bm{\mu}_y}}{ \phiv_{x_k}} \dx \dt = 0,
\end{equation}
for all test functions $\phiv \in (\Ccinf(\mathbb{R}^d \times \mathbb{R}_+))^m$. Here $y$ and $\bm{\sigma}$ denote the generic variables in space-time domain $\mathbb{R}^d \times \mathbb{R}_+$, and state domain $\mathbb{R}^m$, respectively. Moreover, the notation $ \mv{\mbf{g}(\bm{\sigma})}{\bm{\mu}_y}$ denotes the expectation of function $\bm{g}$ with respect to the probability measure $\bm{\mu}_y$ as
\begin{equation*}
\mv{\bm{\mu}_y}{\mbf{g}(\bm{\sigma})} \defeq \int_{\mathbb{R}^m} \! \mbf{g}(\bm{\sigma})\df{\bm{\mu}_y}, \quad \mbf{g}\colon \mathbb{R}^m \rightarrow \mathbb{R}^m.
\end{equation*}
Since the system \eqref{Eq::CL} has an entropy extension with entropy pair $(U, F)$, $\bm{\mu}$ is called an admissible (or entropy) measure-valued solution if
\begin{equation}\label{Eq-mv-entrpy}
\int_{\mathbb{R}^d}\!\int_{\mathbb{R}_+}\! \varphi_t \mv{U(\bm{\sigma})}{\bm{\mu}_y} + \sum^d_{k=1} 
\varphi_{x_k} \mv{F^k(\bm{\sigma})}{\bm{\mu}_y}   \dx \dt \geq 0,
\end{equation}
for all $0 \leq \varphi \in \Ccinf(\mathbb{R}^d \times \mathbb{R}_+)$.  
 The linearity of \eqref{Eq-mvsol} and \eqref{Eq-mv-entrpy} with respect to $\mbf{\mu}$
 helps  prove convergence of a bounded sequence of solutions produced by a vanishing viscosity method, which is a significant problem for traditional weak solutions to nonlinear systems. 
The following \emph{Young's theorem} provides such an appropriate interpretation of convergence:
\begin{theorem}[Theorem 2.1 of ~\cite{szepessy1989convergence}] \label{Thm-mvsol}
Let $\uu_j$ be a uniformly bounded sequence in \mbox{$\Linf(\mathbb{R}^d \times \mathbb{R}_+)$}, i.e., for some constant $C$,
\begin{equation*}
\Linfnorm{\uu_j}{\mathbb{R}^d \times \mathbb{R}_+}  \leq C, \qquad j= 1, 2, 3, \dotsc . 
\end{equation*}
Then there exists a subsequence (again denoted) $\uu_j$ and a family of measurable probability measures $\bm{\mu}_y \in \mathrm{Prob}(\mathbb{R}^m)$, such that $\supp \mbf{\mu}_y$ is contained in $\{ y \in \mathbb{R}^d \! \times \mathbb{R}_+, \, \vert y \vert \leq C \}$ and the $\Linf$ weak-$*\,$limit,
\begin{equation*}\label{Eq::T-wk*cnv}
\mbf{g}(\uu_j(\cdot)) \overset{*}{\rightharpoonup} \bar{\mbf{g}} (\cdot), 
\end{equation*}
exists for all continuous functions $\mbf{g}$ and for almost all points $y \in \mathbb{R}^d \times \mathbb{R}_+$, where 
 $\bar{\mbf{g}}   \defeq \nobreak  \mv{\bm{\mu}_y}{\mbf{g}(\bm{\sigma})}$.
\end{theorem}

\section{Space-time SC-DG formulation}
\label{Chap::DGform}
Here, we introduce the shock capturing discontinuous Galerkin (SC-DG) method for nonlinear systems of conservation laws  \eqref{Eq::CL}. 
A space-time framework, similar to that used in \cite{hiltebrand2014entropy, johnson1987convergence, johnson1990convergence, szepessy1989convergence, szepessy1991convergence},  is proposed for discretization of the problem. We introduce the  space-time triangulation, the approximation space and in particular the structure of the shock capturing term. 

\subsection{Space-time triangulation} 
\label{sec::spacetime}
Adopting the compact support assumption for the solution in a finite time interval ${[0, T]}$, we consider the space domain $\dom \subset \mathbb{R}^d$ such that $\mathrm{supp}\,\uu(\cdot, t) \subset \dom$ at each time $t \in {[0, T]}$.
In order to discretize \eqref{Eq::CL}, let $0= t_0 < t_1 < ... < t_N = T$ be a sequence representing  discrete time steps, and let $I_n = {[t_n, t_{n+1})}$ be the corresponding time intervals. We also denote the space-time domain and space-time slabs by $\domt \defeq \dom \times [0, T] $ and $S_n \defeq \dom \times I_n$, respectively. Moreover $d' \defeq \dim(\domt)$ represents the space-time dimension and clearly $d' = d+1$.

We consider a subdivision $\mathcal{T}_n = \{\el\}$  of $S_n$ into disjoint convex\footnote{The necessity of the convexity requirement becomes clear in the approximation estimates of the $\Hone$-projection \eqref{Eq-H1prj}. } finite elements.   
Without loss of generality, let us assume that 
\begin{equation*}\label{Eq::h_def}
h = \sup_{\el,n} h_\el < \infty, \qquad \el \in \mathcal{T}_n, n \in \{0, \ldots, N-1 \},
\end{equation*}
where $h_\el$ is the exterior diameter of a space-time cell $\el$.
The interior diameter of an element (the diameter of the inscribed circle) is denoted by $\rho_\el$.
We assume the following quasi-uniformity condition 
\begin{equation}\label{Eq-uniformity}
\frac{h}{\rho_\el} \leq \sigma, \qquad  \forall \el \in \mathcal{T}_n,
\end{equation}
with $\sigma > 0$ independent of $h$.   
The perimeter of $\el$ is defined by $p_\el = \Sigma_{e \in \partial \el} \vert e \vert$, where $\vert e \vert$ is the $d$-measure of the face. The uniformity assumption \eqref{Eq-uniformity} implies that (cf.\ \cite{cockburn1994error})
\begin{equation} \label{Eq-area-volume}
\frac{1}{\mu} \leq \frac{p_\el h_\el}{\vert \el \vert} \leq \mu,  \qquad \forall \el \in \mathcal{T}_n,
\end{equation}
for some $\mu > 0$ independent of $h$. Typically, $\el$ might be a tetrahedron or a prism defined as $K \times I_n$, where $K$ corresponds to spatial triangulation on $\mathbb{R}^d$. Seeking easier notation, from now on we present our formulation for prisms. Note, however, that there is no restriction to extend this framework to tetrahedra or tilted prisms (cf. \cite{jaffre1995convergence} for more discussion). 

Temporal trace values are denoted by $ \wh_{n,\pm}(x) \defeq \wh(x, t^n_\pm)$
and to define spatial trace quantities, if $\mbf{n}$ is the outward normal to the spatial interface $\partial K$, we set $\ww_{K, \pm}(x,t) \defeq \lim_{\epsilon \to 0 } \ww(x \pm \epsilon \mbf{n}, t) $ as the associated trace values on an interface.
Also we introduce the notation $\jump{ \ww } \defeq \ww_+  -  \ww_- $ for the (spatial or temporal) jump values on the cell interface.

\subsection{Variational formulation}
The finite dimensional space for the approximate solution is defined as
\begin{equation*}\label{Eq::variational}
\findimn =\{ \ww \in (\Ltwo(S_n))^m \colon \ww \vert_\el \in \left( \mathbb{P}_q(\el)\right)^m, \forall \el \in \mathcal{T}_n\},\qquad n=0, \ldots, N-1,
\end{equation*} 
where $\mathbb{P}_q(\el)$ is the space of polynomials of at most degree $q$ on a domain $\el \subset \mathbb{R}^{d'}$. 
 We also denote $\findim = \prod_{n=0}^{N-1} \findimn$ as the approximation space in global space-time domain. 
The approximating functions are considered discontinuous both in space and time.
 
The proposed shock capturing discontinuous Galerkin  method has the following quasi-linear (nonlinear in first argument and linear in the second one) variational form in terms of entropy variables: Find $\vh \in \findim $ such that
\begin{equation}\label{Eq-DGSC}
\mathcal{B}(\vh, \wh) = \BDG(\vh,\, \wh) + \BSC(\vh, \wh) = 0, \qquad \forall \wh \in \findim. 
\end{equation}
Note that  we realize the functions in terms of entropy variables $\vh$ which are the basic unknowns and the dependent conservative variables are derived via mapping $\uu(\vh)$. In our notation, this mapping is sometimes  omitted, e.g., $\ff(\vh)$ is written rather than $\ff (\uu(\vh ))$.

The scheme \eqref{Eq-DGSC} can be seen as the stripped-down version of the method suggested in \cite{hiltebrand2014entropy}, by disregarding the streamline diffusion (SD) term which is usually added to control the residual.
 
In the following we explain the details and explicit form of terms in \eqref{Eq-DGSC}.

\subsection{DG quasi-linear form}
Using the test function $\wh \in  \findim$ to penalize the interior residual of the cell, jumps of temporal values and spatial flux and applying the integration by part leads to 
\begin{align}\label{Eq::DG-quasilinear}
\BDG(\vh, \wh) =  &- \sumk \int_{I_{n}} \int_{K} \inprod{ \uu(\vh)}{\wh_{t}} + \sum^{d}_{k=1} \inprod{\ff^k(\vh)}{ \wh_{x_{k}}} \dx \dt  \nonumber \\  
 & +\sumk \int_{K}  \inprod{\uu(\vh_{n+1,-})}{\wh_{n+1, -}} - \inprod{\uu(\vh_{n,-})}{\wh_{n, +}}  \dx \nonumber \\
  & + \sumk \int_{I_n} \int_{\partial K} \inprod{ \fl}{ \wh_{K, -}} \dss \dt.
\end{align}
 Also we assume that the initial data $\vh_{0,-} = \vh(x,0_- )$ is obtained from a suitable projection (e.g. $\Ltwo$-projection or the proposed $\Hone$-projection in \eqref{Eq-H1prj}) of the initial data $\vv_0(x) = \vv(\uu_0(x))$. 

Here, $\fl \defeq \hat{\ff}(\vh_{K, -}, \vh_{K, +}; \mbf{n})$
denotes the (spatial) numerical flux
function, a vector-valued function of two interface states $\vh_{K, \pm}$ and the interface normal $\mbf{n}$,
which  is considered to be consistent and conservative. 
Also this numerical flux is supposed to be entropy stable, i.e.\
following \cite{hiltebrand2014entropy}, we consider the spatial numerical flux in  the viscosity  form as
\begin{equation} \label{Eq::diff_op}
\fl = \flent  - \dfrac{1}{2} \diff(\vh) \big( \vh_{K, +} - \vh_{K, -} \big),
\end{equation}
where $\flent \defeq \ff^\star(\vh_{K, -}, \vh_{K, +}; \mbf{n})$ denotes the \emph{entropy conservative flux} and $\break \diff(\vh) \defeq \diff(\vh_{K, -}, \vh_{K, +}; \mbf{n} )$ is the required  numerical diffusion matrix to obtain the entropy stability. 

For comprehensive discussion on entropy conservative and entropy stable fluxes we refer to the seminal paper by Tadmor \cite{tadmor2003entropy}, and just mention that for a general system of conservation law $\flent$ can be written in the form
\begin{equation}\label{Eq::entrp-cnsrv-flux}
\flent = \int^1_0 \ff(\vh(\theta))\cdot\mbf{n} \df{\theta}.
\end{equation}
where $\vh(\theta)$ is a straight line parameterization connecting the two states $\vh_{K,-}$ and  $\vh_{K, +}$ as 
\begin{equation}\label{Eq::barth-param}
\vh(\theta) = \vh_{-} + \theta  \jump{\vh}.
\end{equation}
 
Unfortunately, \eqref{Eq::entrp-cnsrv-flux} does not necessarily have a closed-form and is hard to calculate. We refer to \cite{tadmor2003entropy} for discussions on the practical method for obtaining entropy conservative flux. Also we refer to \cite{fjordholm2012arbitrarily} for explicit formulation of entropy conservative fluxes for Euler and shallow water equations.
    
Moreover, we set $\diff$  as a symmetric positive definite matrix with a uniform spectral bound, i.e.\ there exist positive constants $c$ and $C$ independent of $\vh$ such that 
\begin{equation}
0 < c \inprod{ \ww}{ \ww} \leq \inprod{ \ww}{ \diff(\vh) \ww} \leq C \inprod{ \ww}{ \ww}, \qquad \forall \ww  \neq 0. \label{Eq::D_bound}
\end{equation}

 In order to determine the diffusion operator explicitly we follow \cite{hiltebrand2014entropy} and define
\begin{equation*}
\diff(\vh_{K,-}, \vh_{K,+}; \mbf{n}) =
 \tilde{R}_{\mbf{n}} P(\Lambda_{\mbf{n}}) \tilde{R}^T_{\mbf{n}}.
\end{equation*}

Here, $\Lambda_{\mbf{n}}$ and $\tilde R_{\mbf{n}}$ are eigenvalue and (scaled) eigenvector matrices of the Jacobian matrix $(\ff\cdot \mbf{n})_\uu$ in the normal direction $\mbf{n}$, calculated at an averaged state between $ \vh_{K,-}$ and $\vh_{K,+} $ (e.g., Roe average or arithmetic average). The scaled matrix of right eigenvectors is given as  $\tilde{R}_\mbf{n} = R_\mbf{n} T$ such that $\tilde{R}_\mbf{n}\tilde{R}^T_\mbf{n} = \uu_\vv$. 
Here, matrix $P$ is a non-negative matrix that can be constructed as Roe-type or Rusanov-type \cite{fjordholm2012arbitrarily}:
\begin{itemize}
\item Roe-type diffusion operator
\begin{equation*}
P(\Lambda_{\mbf{n}}) = \mathrm{diag}(\vert \lambda_1 \vert, \dotsc,\vert \lambda_m \vert ),
\end{equation*}
\item Rusanov-type diffusion operator
\begin{equation*}
P(\Lambda_{\mbf{n}}) =  \max(\vert \lambda_1 \vert, \dotsc,\vert \lambda_m \vert )I_{m\times m},
\end{equation*}
\end{itemize}  
where $\lambda_1 , \dotsc, \lambda_m $ are the eigenvalues of $(\ff \cdot \mbf{n})_\uu$.

It is worth mentioning that, by $C$ (or $c$) we will denote a positive constant independent of $h$, not necessarily the same at each occurrence.
 
\subsection{Shock capturing operator}
\label{Sec:scop}
In order to stabilize the scheme in the presence of  discontinuities we need to add a form of  artificial viscosity. We expect this operator to add a significant stabilization effect close to discontinuities, while  only a little viscosity is added in  smooth regions.  In this formulation, the residual of the finite element solution is used as a sensor for presence of discontinuities.
 
Here we follow \cite{barth1999numerical, hiltebrand2014entropy} in introducing the shock capturing  operator as
\begin{align}\label{Eq-B_SC}
\BSC (\vh, \ww^h) = \sumk \int_{I_n} \int_{K} \epsk \Big( \inprod{ \ww^h_{t}}{ \tilde{\uu}_{\vv} \vh_{t}}  
 + \sum^{d}_{k=1} \inprod{ \ww_{x_k}}{ \tilde{\uu}_{\vv} \vv_{x_k} } \Big) \dx \dt,
\end{align}
where the viscosity $\epsk$ is defined as
\begin{equation}\label{Eq-D_SC}
\epsk = \frac{h^{\alpha_1} C_1^{SC} \Resb + h^{\alpha_2} C_2^{SC} \BResb } {\gradb + h^{\theta}     }.
\end{equation}
Here,  $C_1^{SC}$ and $C_2^{SC}$ are two positive constants and $h^\theta$ is added as the regularization parameter  with parameter $\theta$ such that 
\begin{equation}\label{Eq::tetabound}
\theta \geq \max \{ \frac{d'}{2} - \frac{\alpha_1}{2}, \frac{d'}{2} - \alpha_2 \}.
\end{equation}
Also the viscosity strength parameters $\alpha_1$ and $\alpha_2$ are chosen such that
\begin{equation}\label{Eq::alfabound}
\alpha_1 \in (0,2), \qquad \alpha_2 > 0.
\end{equation}
The rationale behind these choices for $\theta$, $\alpha_1$ and $\alpha_2$ are discussed later in \S\ref{sect::conv}. It should be noted that the scaling of the viscosity coefficient \eqref{Eq::alfabound} is less diffusive, compared to range $\alpha_1 \in (0, 1)$ and $\alpha_2 \in (0, 1/2)$ in~\cite{hiltebrand2014entropy}, due to refined estimates used in \S\ref{sect::conv}. 

We denote  the local residual and space-time gradient as
\begin{align}
\label{Eq::residual}
\Res = \uu(\vh)_t + \sum_{k=1}^{d} \ff^k(\vh)_{x_k}, \\
\nabla \vv = (\nabla_{t} \vv, \nabla_{x_1} \vv, \dots, \nabla_{x_d} \vv)^T
\end{align}
and we have the following definitions for
the weighted cell and boundary residuals, and the weighted gradient, respectively,
\begin{align}
\Resb^2 &\defeq \int_{I_n} \int_{K} \inprod{ \Res}{ \vv_\uu(\vh(x,t)) \Res } \dx \dt, \label{Eq::Res} \\
\BResb^2 &\defeq  \int_K \vert \jump{\uu(\vh_{n})}\vert^2 \dx \nonumber \\ & \qquad + \int_{I_n  } \!  \int_{\partial K} \! \Big(  \vert \flent - \ff(\vh_{K, -})\cdot\mbf{n} \vert ^2  + \vert \frac{1}{2} \diff(\vh) \jump{\vh_{K}} \vert^2 \Big) \dss \dt, \label{Eq-BRes} \\
\gradb^2 &\defeq \int_{I_n}\int_{K} \inprod{ \vh_t}{ \tilde{\uu}_\vv \vh_t}  + \sum^d_{k=1} \inprod{ \vh_{x_k}}{ \tilde{\uu}_\vv \vh_{x_k}} \dx \dt \label{Eq::Bgrad}.
\end{align}
Here by $\tilde{\uu}_{\vv}$  we denote $ \uu_{\vv}(\tilde{\vv}_{n, k})$, and $\tilde{\vv}_{n, K}$ is the cell average 
defined as
\begin{equation}
\tilde{\vv}_\el \defeq \frac{1}{\vert \el \vert} \int_{I_n} \int_K \! \vh(x,t) \dx \dt.
\end{equation}

Now the proposed SC-DG method  \eqref{Eq-DGSC}  is well-defined. The rest of the paper is basically devoted to the proofs of entropy stability and convergence to entropy  measure-valued solution for \eqref{Eq-DGSC}.

\section{Energy analysis}
\label{chap::stab}
We first note that the approximate solution of \eqref{Eq-DGSC} satisfies the global entropy inequality in the fully discrete sense. Then, by adopting some additional assumptions, we show a weak BV-estimate. 
\subsection{Entropy stability}
The entropy stability result is given as the following theorem: 
\begin{theorem}[Theorem 3.1 of \cite{hiltebrand2014entropy}] \label{Thm::entropy-stab} Consider the system of conservation laws \eqref{Eq::CL}, equipped with strictly convex entropy function $U$ and corresponding entropy flux functions $F^{k}, k = 1, \dotsc, d$. 
Furthermore, assume that the exact and approximate solutions have compact support inside the spatial domain
$\dom$. Then, the SC-DG	 scheme \eqref{Eq-DGSC} approximating \eqref{Eq::CL} has the following properties:
\begin{enumerate}[(i)]
\item The scheme \eqref{Eq-DGSC} is conservative in the following sense: If $\uh= \uu(\vh)$ is the approximate solution, then
\begin{equation*} \label{Eq-T-3.1-part1-conserv}
\int_{\dom} \uu(\vh(x, t^N_-)) \dx = \int_{\dom} \uu(\vh(x, t^0_-)) \dx. 
\end{equation*}
\item The scheme \eqref{Eq-DGSC} is entropy stable i.e., the approximate solution $\uh$ admits the following fully discrete global entropy bounds, 
\begin{equation*} \label{Eq-T-3.1-part2-stbl}
\int_{\dom} U(\uu^*(t^0_-)) \dx \leq \int_{\dom} U(\uu(\vh(x, t^N_-))) \dx \leq  \int_{\dom} U(\uu(\vh(x, t^0_-))) \dx,
\end{equation*}
where $\uu^*(t^0_-)$ is called the minimum total entropy state of the projected initial data  and is defined as
\begin{equation*}
\uu^*(t^0_-) = \frac{1}{\vert \dom \vert } \int_{\dom} \uu(\vh(x, t^0_-)) \dx.
\end{equation*}
\end{enumerate}
\end{theorem}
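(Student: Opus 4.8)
The plan is to test the variational identity \eqref{Eq-DGSC} with two judicious choices of $\mbf{w}^h$, both lying in $\mathcal{V}^q_n$: componentwise-constant functions for the conservation statement, and $\mbf{w}^h=\mbf{v}^h$ itself for the entropy bounds. For part (i), inserting a constant test function annihilates $\mbf{w}^h_t$ and $\mbf{w}^h_{x_k}$, so the interior-residual integrals in \eqref{Eq::DG-quasilinear} and the whole shock-capturing form \eqref{Eq-B_SC} drop out; the temporal boundary terms $\sum_{n,K}\int_K(\inprod{\mbf{u}(\mbf{v}^h_{n+1,-})}{\mbf{w}^h}-\inprod{\mbf{u}(\mbf{v}^h_{n,-})}{\mbf{w}^h})\df{x}$ telescope over the slabs $n=0,\dots,N-1$, while $\sum_{n,K}\int_{I_n}\!\int_{\partial K}\inprod{\fl}{\mbf{w}^h}\df{s}\df{t}$ cancels pairwise across interior faces by conservativity of $\hat{\mbf{f}}$ and vanishes on $\partial\Omega$ by the compact-support hypothesis. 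Running over the canonical basis vectors of $\mathbb{R}^m$ then yields \eqref{Eq-T-3.1-part1-conserv}.

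For the upper bound in \eqref{Eq-T-3.1-part2-stbl} I would take $\mbf{w}^h=\mbf{v}^h$. The shock-capturing contribution is then $\mathcal{B}_{SC}(\mbf{v}^h,\mbf{v}^h)\ge 0$, since $\varepsilon_{n,K}\ge 0$ (the numerator of \eqref{Eq-D_SC} is a sum of nonnegative terms, the denominator strictly positive) and $\tilde{\mbf{u}}_{\mbf{v}}=\mbf{u}_{\mbf{v}}(\tilde{\mbf{v}}_{n,K})$ is symmetric positive definite. For $\mathcal{B}_{DG}(\mbf{v}^h,\mbf{v}^h)$ the key are the two pointwise identities $\inprod{\mbf{f}^k(\mbf{v})}{\mbf{v}_{x_k}}=(\psi^k(\mbf{v}))_{x_k}$, with $\psi^k$ the entropy potential \eqref{Eq-T-duality_F}, and $\inprod{\mbf{u}(\mbf{v})}{\mbf{v}_t}=(\phi(\mbf{v}))_t$ with $\phi(\mbf{v}):=\inprod{\mbf{u}(\mbf{v})}{\mbf{v}}-U(\mbf{u}(\mbf{v}))$; both follow from $\mbf{v}=(U_{\mbf{u}})^T$ and the symmetry of $\mbf{f}^k_{\mbf{v}}$. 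Integrating these by parts on each space-time cell $K\times I_n$ turns the interior integrals of \eqref{Eq::DG-quasilinear} into temporal boundary terms $-\int_K(\phi(\mbf{v}^h_{n+1,-})-\phi(\mbf{v}^h_{n,+}))\df{x}$ and spatial surface terms $-\int_{I_n}\!\int_{\partial K}\psi(\mbf{v}^h_{K,-})\cdot\mbf{n}\,\df{s}\df{t}$.

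Next I would reassemble the two groups of terms. Collecting the temporal parts slab by slab and using $-\phi(\mbf{v})+\inprod{\mbf{u}(\mbf{v})}{\mbf{v}}=U(\mbf{u}(\mbf{v}))$ leaves, at the interface at time $t_n$, the quantity $\int_K\bigl(\inprod{\mbf{u}(\mbf{v}^h_{n,+})-\mbf{u}(\mbf{v}^h_{n,-})}{\mbf{v}^h_{n,+}}-U(\mbf{u}(\mbf{v}^h_{n,+}))\bigr)\df{x}$, which by convexity of $U$ — in the form $U(\mbf{u}(\mbf{v}^h_{n,+}))-U(\mbf{u}(\mbf{v}^h_{n,-}))\le\inprod{\mbf{v}^h_{n,+}}{\mbf{u}(\mbf{v}^h_{n,+})-\mbf{u}(\mbf{v}^h_{n,-})}$ — is bounded below by $-\int_K U(\mbf{u}(\mbf{v}^h_{n,-}))\df{x}$; after telescoping this gives $\int_\Omega U(\mbf{u}(\mbf{v}^h(x,t^N_-)))\df{x}-\int_\Omega U(\mbf{u}(\mbf{v}^h(x,t^0_-)))\df{x}$ as a lower bound for the temporal contribution. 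For the spatial parts, combining the interior surface term with the numerical-flux term face by face, substituting $\fl=\flent-\tfrac12 D\llbracket\mbf{v}^h\rrbracket^+_-$, and invoking conservativity together with the entropy-conservation identity \eqref{Eq:entrpy-cnsrv}, each interior face collapses to $\tfrac12\int_e\inprod{\llbracket\mbf{v}^h\rrbracket^+_-}{D\llbracket\mbf{v}^h\rrbracket^+_-}\df{s}\df{t}\ge 0$ by \eqref{Eq::D_bound}, and boundary faces vanish by compact support. Hence $0=\mathcal{B}(\mbf{v}^h,\mbf{v}^h)\ge\int_\Omega U(\mbf{u}(\mbf{v}^h(x,t^N_-)))\df{x}-\int_\Omega U(\mbf{u}(\mbf{v}^h(x,t^0_-)))\df{x}$, which is the right inequality.

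Finally, the left inequality follows from part (i): conservation gives $\tfrac1{\vert\Omega\vert}\int_\Omega\mbf{u}(\mbf{v}^h(x,t^N_-))\df{x}=\mbf{u}^*(t^0_-)$, so Jensen's inequality for the convex $U$ yields $\vert\Omega\vert\,U(\mbf{u}^*(t^0_-))\le\int_\Omega U(\mbf{u}(\mbf{v}^h(x,t^N_-)))\df{x}$. I expect the main obstacle to be bookkeeping rather than any deep estimate: one must track interior versus exterior traces consistently across the space-time interfaces so that the conservativity cancellations on interior faces and the signs in the convexity estimates for the temporal jumps come out correctly; once the algebra of $\psi$, $\phi$, and the interface jumps is organized, positive-definiteness of $D$ and of $\tilde{\mbf{u}}_{\mbf{v}}$ together with convexity of $U$ supply all the needed inequalities, and the discrete entropy production is seen to be concentrated in the time-jump terms, the interface $D$-dissipation, and the shock-capturing term.
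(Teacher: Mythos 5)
Your proposal is correct and follows essentially the same route as the paper's (sketched) proof: test with constants for conservation, test with $\mbf{v}^h$ for the upper entropy bound using $\mathcal{B}_{SC}(\mbf{v}^h,\mbf{v}^h)\ge 0$, the entropy-potential identity together with \eqref{Eq:entrpy-cnsrv} and positive definiteness of $D$ for the spatial part, convexity of $U$ for the temporal jumps, and conservation plus Jensen for the lower bound. The details you supply (the $\psi^k$ and $\phi$ identities, the face-by-face cancellation, the Legendre-type convexity inequality, which is Lemma~\ref{Rem-Bath2006} in disguise) are exactly the ones the paper defers to \cite{hiltebrand2014entropy} and \cite{mythesis}.
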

\begin{proof}(Sketch)
The proof of this theorem is not strongly dependent on the presence of streamline-diffusion stabilization, and is in fact very similar to the proof presented in \cite{hiltebrand2014entropy}. We give only a sketch here, mainly with the aim to introduce terms that  facilitate exposition of the material in the following. Consult \cite{mythesis} for a more detailed version of the proof.

First we note that the conservation property~\eqref{Eq-T-3.1-part1-conserv} follows immediately from choosing $\wh \equiv 1$ in~\eqref{Eq-DGSC}. The second assertion is obtained by  considering the following decomposition of \eqref{Eq-DGSC}
and inserting $\wh = \vh$ in it, to prove a series of inequalities:
\begin{align}
	\BSC(\vh, \vh) & \geq 0, \nonumber \\
	\BDG^{(s)} (\vh, \vh) & =  - \sumk \int_{I_{n}} \int_{K}  \sum^{d}_{k=1} \inprod{\ff^k(\vh)}{ \wh_{x_{k}}} \dx \dt \nonumber \\ & \quad+ \sumk \int_{I_n} \int_{\partial K} \inprod{ \fl}{ \wh_{K, -}} \dss \dt \geq 0, \nonumber\\
	\BDG^{(t)} (\vh, \vh) & = - \sumk \int_{I_{n}} \int_{K} \inprod{ \uu(\vh)}{\wh_{t}} \dx \dt\nonumber \\
	&\quad  +\sumk \int_{K} \left( \inprod{\uu(\vh_{n+1,-})}{\wh_{n+1, -}} - \inprod{\uu(\vh_{n,-})}{\wh_{n, +}} \right) \dx \nonumber \\
	& \geq  \int_\dom U(\uu(\vh(x,t^N_-)) \dx - \int_\dom U(\uu(\vh(x,t^0_-)) \dx. \nonumber
\end{align}
These estimates together give the upper bound in~\eqref{Eq-T-3.1-part2-stbl}. The lower bound is obtained exactly as in~\cite{hiltebrand2014entropy}. 
\end{proof}

Now, assume that the spectral bound~\eqref{Eq::D_bound} holds, and  there exist some constants $c$ and $C$ independent of $\vh$, such that 
\begin{align} 
0 < c \inprod{ \ww}{ \ww} \leq \inprod{ \ww}{ \uu_\vv (\vh(x,t)) \ww} \leq C \inprod{\ww}{ \ww}, \qquad \forall \ww  \neq 0.  \label{Eq::u_v_bound}
\end{align}
Then, we can make the inequalities of the proof of Theorem \ref{Thm::entropy-stab} sharper (cf. \cite{hiltebrand2014entropy, mythesis} for more details)
\begin{align*}
	\BSC(\vh, \vh) &  \geq C   \sumk  \epsk \Ltwonorm{ \nabla \vh }{\el}^{2}, \nonumber \\
	\BDG^{(s)} (\vh, \vh) & \geq  C  \sumk \int_{I_n} \! \int_{\partial K} \! \vert \jump{\vh_K} \vert^2 \dss \dt, \nonumber \\ 
	\BDG^{(t)} (\vh, \vh) & \geq   C  \sumk \! \int_K \! \vert \jump{\vh_n} \vert^2 \dx  +  \int_\dom \!  U(\vh(x,t^N_-)) \dx - \!  \int_\dom \!  U(\vh(x,t^0_-)) \dx. 
\end{align*}
The global entropy inequality~\eqref{Eq-T-3.1-part2-stbl} together with the above inequalities  imply
\begin{align}\label{Eq::temp-corr}
  \sumk  \epsk \Ltwonorm{ \nabla \vh}{\el}^{2}  +  \sumk  \int_K \! \vert  \jump{\vh_{n}} \vert^2 \dx  &+  \sumk \int_{I_n} \!  \int_{\partial K} \! \! \vert  \jump{\vh_K} \vert^2 \dss \dt \leq  C( \vh_{0,-}),
\end{align}
which readily gives
\begin{equation}\label{Eq::visc_bound}
  \sumk  \epsk \Ltwonorm{ \nabla \vh}{\el}^{2}  \leq C.
\end{equation}
This result will be used in the later proofs. Also note that due to the conditions \eqref{Eq::u_v_bound} and \eqref{Eq::D_bound}, the weighted residual (in \eqref{Eq::Res}) and weighted gradient (in \eqref{Eq::Bgrad}) are  norms equivalent to  $ \Ltwonorm{ \Res}{\el} $ and $ \Ltwonorm{ \nabla \vh}{\el}$, respectively.

\subsection{BV-estimate}
In order to prove  convergence, we require a BV-estimate for the approximate solutions of the SC-DG method \eqref{Eq-DGSC}.  Before reaching to that point we need to state  Lemmas \ref{Lem-app-ineq} and \ref{Lem-app-bnd-ineq}. The proofs will be presented in the appendix:
\begin{lemma}\label{Lem-app-ineq} Let us assume that \eqref{Eq::u_v_bound} holds and there exists a
 uniform spectral upper bound for  $\ff_\vv$, i.e.\
\begin{equation}\label{Eq::f_v_bound}
 \inprod{ \ww}{\ff_\vv \ww} \leq C \inprod{ \ww}{  \ww}, \qquad \forall \ww \neq 0,
\end{equation}
where $C$ is uniform and independent of $\ww$.
 Then one can find a uniform upper bound with respect to $h$ for
\begin{enumerate}[(i)]
\item $\ds h^\gamma \sumk \Resb$ if $\gamma \geq \dfrac{d'+\alpha_1}{2}$.
\item $\ds h^\gamma \sumk \Resb \Ltwonorm{ \nabla \vh}{\el} $ if 
$\gamma \geq \alpha_1 $.
\end{enumerate}
Moreover, these expressions vanish as $h \to 0$, if the inequalities hold strictly. 
\end{lemma}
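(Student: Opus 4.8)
\emph{Proof proposal.} The idea is to couple the viscosity bound \eqref{Eq::visc_bound} to a dichotomy on the local gradient size: cells with a ``large'' gradient are handled by a Cauchy--Schwarz estimate against \eqref{Eq::visc_bound}, while cells with a ``small'' gradient are handled by crudely counting elements, which is affordable precisely because of the lower bound \eqref{Eq::tetabound} on the regularization exponent $\theta$.

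For a space-time cell $\kappa = K\times I_n$, abbreviate
\[
G_{n,K} := \Big(\int_{I_n}\!\!\int_K \big( \inprod{\mbf{v}^h_t}{\tilde{\mbf{u}}_{\mbf{v}}\mbf{v}^h_t} + \sum_{k=1}^d \inprod{\mbf{v}^h_{x_k}}{\tilde{\mbf{u}}_{\mbf{v}}\mbf{v}^h_{x_k}}\big)\df{x}\df{t}\Big)^{1/2},
\]
so that $\varepsilon_{n,K}=(h^{\alpha_1}C_1^{SC}\overline{Res}_{n,K}+h^{\alpha_2}C_2^{SC}\overline{BRes}_{n,K})/(G_{n,K}+h^\theta)\ge 0$. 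The spectral bounds \eqref{Eq::u_v_bound} (applied to $\tilde{\mbf{u}}_{\mbf{v}}$) give $c\Vert\nabla\mbf{v}^h\Vert_{L_2(I_n\times K)}\le G_{n,K}\le C\Vert\nabla\mbf{v}^h\Vert_{L_2(I_n\times K)}$, so \eqref{Eq::visc_bound} yields $\sum_{n,K}\varepsilon_{n,K}G_{n,K}^2\le C$, and dropping the (nonnegative) $\overline{BRes}$ term from the numerator of $\varepsilon_{n,K}$,
\begin{equation}\label{Eq::pp-star}
\sum_{n,K}\frac{h^{\alpha_1}\,\overline{Res}_{n,K}\,G_{n,K}^2}{G_{n,K}+h^\theta}\le C .
\end{equation}
I also use two elementary facts. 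First, writing $Res=\mbf{u_v}(\mbf{v}^h)\mbf{v}^h_t+\sum_k\mbf{f}^k_{\mbf{v}}(\mbf{v}^h)\mbf{v}^h_{x_k}$ and bounding the spectral norms of $\mbf{u_v}$ and of the symmetric matrices $\mbf{u_v}^{-1/2}\mbf{f}^k_{\mbf{v}}\mbf{u_v}^{-1/2}$ (which are similar to $\mbf{f}^k_\mbf{u}$, hence carry the characteristic speeds as eigenvalues) uniformly via \eqref{Eq::u_v_bound} and \eqref{Eq::f_v_bound}, one obtains a uniform constant with
\begin{equation}\label{Eq::pp-resG}
\overline{Res}_{n,K}\le C\,G_{n,K}\qquad\text{on every cell}.
\end{equation}
Second, quasi-uniformity \eqref{Eq-uniformity} forces $\vert\kappa\vert\ge c\,h^{d'}$, so the number of cells is $\le C\,h^{-d'}$.

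Now split the cells into $\mathcal A=\{(n,K):G_{n,K}\ge h^\theta\}$ and $\mathcal B=\{(n,K):G_{n,K}<h^\theta\}$. On $\mathcal A$ one has $G_{n,K}^2/(G_{n,K}+h^\theta)\ge G_{n,K}/2$, so by \eqref{Eq::pp-star} $h^{\alpha_1}\sum_{\mathcal A}\overline{Res}_{n,K}G_{n,K}\le C$; on $\mathcal B$, \eqref{Eq::pp-resG} gives $\overline{Res}_{n,K}\le Ch^\theta$ and $\overline{Res}_{n,K}G_{n,K}\le Ch^{2\theta}$. For \emph{(ii)} this yields $h^\gamma\sum_{\mathcal A}\overline{Res}_{n,K}G_{n,K}\le Ch^{\gamma-\alpha_1}$ and $h^\gamma\sum_{\mathcal B}\overline{Res}_{n,K}G_{n,K}\le Ch^{\gamma+2\theta-d'}$; since $\gamma\ge\alpha_1$ and $2\theta\ge d'-\alpha_1$ by \eqref{Eq::tetabound}, both exponents are nonnegative (and positive when $\gamma>\alpha_1$). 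For \emph{(i)}, on $\mathcal B$ we get $h^\gamma\sum_{\mathcal B}\overline{Res}_{n,K}\le Ch^{\gamma+\theta-d'}$ with $\gamma\ge(d'+\alpha_1)/2\ge d'-\theta$, while on $\mathcal A$ I apply Cauchy--Schwarz through the factorization $\overline{Res}_{n,K}=\big(\overline{Res}_{n,K}G_{n,K}^2/(G_{n,K}+h^\theta)\big)^{1/2}\big(\overline{Res}_{n,K}(G_{n,K}+h^\theta)/G_{n,K}^2\big)^{1/2}$:
\[
\sum_{\mathcal A}\overline{Res}_{n,K}\le\Big(\sum_{\mathcal A}\frac{\overline{Res}_{n,K}G_{n,K}^2}{G_{n,K}+h^\theta}\Big)^{1/2}\Big(\sum_{\mathcal A}\frac{\overline{Res}_{n,K}(G_{n,K}+h^\theta)}{G_{n,K}^2}\Big)^{1/2}\le \big(Ch^{-\alpha_1}\big)^{1/2}\big(Ch^{-d'}\big)^{1/2},
\]
the first factor by \eqref{Eq::pp-star} and the second because \eqref{Eq::pp-resG} gives $\overline{Res}_{n,K}(G_{n,K}+h^\theta)/G_{n,K}^2\le C(1+h^\theta/G_{n,K})\le 2C$ on $\mathcal A$, combined with the cell count. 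Thus $h^\gamma\sum_{\mathcal A}\overline{Res}_{n,K}\le Ch^{\gamma-(d'+\alpha_1)/2}$, which is bounded for $\gamma\ge(d'+\alpha_1)/2$ and tends to $0$ when the inequality is strict; adding the $\mathcal A$ and $\mathcal B$ contributions completes both parts and the vanishing statement.

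The crux is the $\mathcal A$-estimate in part \emph{(i)}: replacing the Cauchy--Schwarz step by the naive inverse inequality $\overline{Res}_{n,K}\lesssim h^{-1}\Vert\mbf{v}^h\Vert_{L_2(\kappa)}$ together with cell counting would only yield the threshold $\gamma\ge(d'+2)/2$, so one really has to route through \eqref{Eq::pp-star}; moreover the bound \eqref{Eq::tetabound} on $\theta$ is calibrated exactly so that the $\mathcal B$-contribution is admissible at the \emph{same} threshold on $\gamma$. A lesser technical point is establishing \eqref{Eq::pp-resG}, which relies on the symmetrizability of the flux Jacobians and on the assumed uniform $L_\infty$ bound keeping the characteristic speeds finite.
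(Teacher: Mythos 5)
Your argument is correct and arrives at the same thresholds under the same hypotheses; it shares the skeleton of the paper's proof (a dichotomy on the size of the local gradient, the a priori viscosity bound \eqref{Eq::visc_bound}, the pointwise estimate $\overline{Res}_{n,K}\leq C\Vert\nabla\mbf{v}^h\Vert_{L_2(K\times I_n)}$ of \eqref{Eq-app-resbarineq}, and the $O(h^{-d'})$ cell count from \eqref{Eq-uniformity}), but the mechanics differ in two places. For part (i) the paper splits at a threshold $\Gamma=Ch^{\beta}$ with $\beta$ left free, estimates the large-gradient cells purely through the viscosity bound (yielding $h^{\gamma-\alpha_1-\beta}+h^{\gamma-\alpha_1+\theta-2\beta}$) and the small-gradient cells purely by counting (yielding $h^{\gamma+\beta-d'}$), and then optimizes over $\beta$, which forces $\beta=(d'-\alpha_1)/2$ and $\gamma\geq(d'+\alpha_1)/2$; you instead fix the split at $h^{\theta}$ and handle the large-gradient cells by a Cauchy--Schwarz interpolation between the weighted residual sum (your starred bound, which is precisely the paper's \eqref{Eq::Lemma-app2-a}) and the cell count, which produces the exponent $\gamma-(d'+\alpha_1)/2$ in a single step and dispenses with the free parameter, at the mild cost of invoking the cell count on both halves of the dichotomy. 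For part (ii) the paper uses the pointwise inequality \eqref{Eq-Prop-Hilte} together with part (i) applied at $\gamma=\theta+\alpha_1$, whereas your $\mathcal{A}/\mathcal{B}$ split is the same dichotomy written out explicitly and is self-contained. Both routes consume the condition \eqref{Eq::tetabound} on $\theta$ in exactly the same way, and both give the stated vanishing when the inequalities on $\gamma$ are strict.
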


A similar lemma can be stated for the boundary residual terms:  
\begin{lemma}\label{Lem-app-bnd-ineq} Assuming that \eqref{Eq::D_bound} and \eqref{Eq::f_v_bound} hold,  one can find a uniform upper bound with respect to $h$ for
\begin{enumerate}[(i)]
\item $\ds h^\gamma \sumk \BResb$ if $\gamma \geq \dfrac{d'}{2}$.
\item $\ds h^\gamma \sumk \BResb \Ltwonorm{ \nabla \vh }{\el} $ if 
$\gamma \geq \alpha_2 $.
\end{enumerate}
Moreover, these expressions vanish as $h \to 0$, if the inequalities hold strictly. 
\end{lemma}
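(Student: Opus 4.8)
The plan is to prove the two bounds separately: the first directly from the fully discrete BV-type estimate~\eqref{Eq::temp-corr}, and the second by feeding that first bound, together with Lemma~\ref{Lem-app-ineq}(i) and the viscosity control~\eqref{Eq::visc_bound}, into an argument built around the definition~\eqref{Eq-D_SC} of $\varepsilon_{n,K}$. For part~(i) I would first estimate $\overline{BRes}_{n,K}$ from~\eqref{Eq-BRes} purely in terms of the interface and temporal jumps of $\mbf{v}^h$. The one-sided spectral hypotheses~\eqref{Eq::D_bound},~\eqref{Eq::u_v_bound},~\eqref{Eq::f_v_bound}, combined with the symmetrizability of the system, yield uniform operator-norm bounds on $D$, $\mbf{u_v}$ and $\mbf{f_v}$, hence uniform Lipschitz constants for $\mbf{v}\mapsto\mbf{u}(\mbf{v})$ and $\mbf{v}\mapsto\mbf{f}^k(\mbf{v})$. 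Writing the entropy-conservative flux through its integral representation~\eqref{Eq::entrp-cnsrv-flux}--\eqref{Eq::barth-param}, one gets $|\flent-\mbf{f}(\mbf{v}^h_{K,-})\cdot\mbf{n}|\leq C|\llbracket\mbf{v}^h_K\rrbracket^+_-|$, and likewise $|\mbf{u}(\mbf{v}^h_{n,-})-\mbf{u}(\mbf{v}^h_{n,+})|\leq C|\mbf{v}^h_{n,-}-\mbf{v}^h_{n,+}|$ and $|\tfrac12 D\llbracket\mbf{v}^h\rrbracket^+_-|\leq C|\llbracket\mbf{v}^h\rrbracket^+_-|$, so that $\overline{BRes}_{n,K}^2\leq C\big(\int_K|\mbf{v}^h_{n,-}-\mbf{v}^h_{n,+}|^2\df{x}+\int_{I_n}\!\int_{\partial K}|\mbf{v}^h_{K,+}-\mbf{v}^h_{K,-}|^2\df{s}\df{t}\big)$. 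Summing over $n,K$ and invoking~\eqref{Eq::temp-corr} gives $\sum_{n,K}\overline{BRes}_{n,K}^2\leq C$; since quasi-uniformity~\eqref{Eq-uniformity} (with~\eqref{Eq-area-volume}) forces every cell to satisfy $|\kappa|\geq c\,h^{d'}$, the number of cells is $\mathcal O(h^{-d'})$, and Cauchy--Schwarz gives $\sum_{n,K}\overline{BRes}_{n,K}\leq C h^{-d'/2}$. Multiplying by $h^\gamma$ proves part~(i), with vanishing when $\gamma>d'/2$.

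For part~(ii) I would exploit~\eqref{Eq-D_SC} directly. Denote by $N_{n,K}$ the square-root term in the denominator of $\varepsilon_{n,K}$; by~\eqref{Eq::u_v_bound} one has $c^{1/2}\|\nabla\mbf{v}^h\|_{L_2(I_n\times K)}\leq N_{n,K}\leq C^{1/2}\|\nabla\mbf{v}^h\|_{L_2(I_n\times K)}$, and by construction $h^{\alpha_2}C_2^{SC}\overline{BRes}_{n,K}\leq\varepsilon_{n,K}(N_{n,K}+h^\theta)$. Multiplying by $\|\nabla\mbf{v}^h\|_{L_2(I_n\times K)}$ and summing, I split the right-hand side into a \emph{viscous} part $\sum_{n,K}\varepsilon_{n,K}N_{n,K}\|\nabla\mbf{v}^h\|_{L_2(I_n\times K)}\leq C^{1/2}\sum_{n,K}\varepsilon_{n,K}\|\nabla\mbf{v}^h\|^2_{L_2(I_n\times K)}$, which is bounded by~\eqref{Eq::visc_bound}, and a \emph{regularization} part $h^\theta\sum_{n,K}\varepsilon_{n,K}\|\nabla\mbf{v}^h\|_{L_2(I_n\times K)}$. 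For the latter I use $\|\nabla\mbf{v}^h\|_{L_2(I_n\times K)}\leq c^{-1/2}N_{n,K}$ together with $\varepsilon_{n,K}N_{n,K}\leq\varepsilon_{n,K}(N_{n,K}+h^\theta)=h^{\alpha_1}C_1^{SC}\overline{Res}_{n,K}+h^{\alpha_2}C_2^{SC}\overline{BRes}_{n,K}$, reducing it to $C\big(h^{\theta+\alpha_1}\sum_{n,K}\overline{Res}_{n,K}+h^{\theta+\alpha_2}\sum_{n,K}\overline{BRes}_{n,K}\big)$. Collecting the pieces, $h^\gamma\sum_{n,K}\overline{BRes}_{n,K}\|\nabla\mbf{v}^h\|_{L_2(I_n\times K)}$ is dominated by $C h^{\gamma-\alpha_2}+C h^{\gamma-\alpha_2+\theta+\alpha_1}\sum_{n,K}\overline{Res}_{n,K}+C h^{\gamma+\theta}\sum_{n,K}\overline{BRes}_{n,K}$. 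The first term is bounded for $\gamma\geq\alpha_2$; the second is controlled by Lemma~\ref{Lem-app-ineq}(i), because $\gamma-\alpha_2+\theta+\alpha_1\geq\alpha_1+\tfrac{d'-\alpha_1}{2}=\tfrac{d'+\alpha_1}{2}$ by~\eqref{Eq::tetabound}; and the third is controlled by part~(i), because $\gamma+\theta\geq\alpha_2+\tfrac{d'}{2}-\alpha_2=\tfrac{d'}{2}$, again by~\eqref{Eq::tetabound}. A strict inequality $\gamma>\alpha_2$ turns each of these estimates into a vanishing one as $h\to0$.

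The step I expect to be the main obstacle is the regularization term in part~(ii): after trading $\varepsilon_{n,K}N_{n,K}$ for the numerator of~\eqref{Eq-D_SC}, the residual sums reappear with exponents that are only \emph{just} admissible for Lemma~\ref{Lem-app-ineq}(i) and for part~(i) of the present lemma, so the bookkeeping must be carried out carefully to see that the choice~\eqref{Eq::tetabound} of $\theta$ is exactly what is required, and that it is precisely this $h^\theta$-term (rather than the viscous term, which only uses $\gamma\geq\alpha_2$ and~\eqref{Eq::visc_bound}) that dictates both lower bounds on $\theta$. A secondary, routine point is the extraction of genuine operator-norm, and hence Lipschitz, bounds from the one-sided spectral hypotheses~\eqref{Eq::D_bound},~\eqref{Eq::u_v_bound},~\eqref{Eq::f_v_bound}, which relies on the symmetrizability/hyperbolicity of the system.
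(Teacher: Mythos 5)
Your proof is correct and follows essentially the same route as the paper: part (i) reduces $\overline{BRes}_{n,K}^{\,2}$ to the jump terms controlled by \eqref{Eq::temp-corr} via the Lipschitz/spectral bounds and then applies Cauchy--Schwarz over the $\mathcal{O}(h^{-d'})$ cells, while part (ii) combines \eqref{Eq::visc_bound} with part (i) and Lemma \ref{Lem-app-ineq}(i) at the shifted exponents $\theta+\alpha_2$ and $\theta+\alpha_1$, which is exactly how the paper argues by mimicking Lemma \ref{Lem-app-ineq}(ii). The only cosmetic differences are that you split $\varepsilon_{n,K}(N_{n,K}+h^{\theta})\Vert\nabla\mbf{v}^h\Vert$ additively where the paper uses the pointwise max inequality \eqref{Eq-Prop-Hilte} (so one harmless extra $\overline{Res}$-term appears in your bound), and that you correctly read the $\overline{Res}$ in the statement of (ii) as $\overline{BRes}$.
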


Now the BV-estimate is obtained as a corollary of Theorem \ref{Thm::entropy-stab}, starting from inequality~\eqref{Eq::temp-corr}: 
\begin{corollary} Let the assumptions of Theorem \ref{Thm::entropy-stab} hold and  the diffusion matrix $\diff(\vh)$ be spectrally bounded as in \eqref{Eq::D_bound}. Also we assume similar spectral boundedness for $\uu_\vv$, i.e.,~\eqref{Eq::u_v_bound} holds. Then the approximate solution $\vh$ satisfies the following weak BV-estimate:
\begin{align} \label{Eq-T-3.1-part3-BV}
& \sumk \int_K \vert \jump{\vh_{n}} \vert^2 \dx 
 +  \sumk \int_{I_n} \int_{\partial K} \vert \jump{\vh_{K}}\vert^2 \dss \dt \nonumber \\
+ & h^{\alpha_1} \sumk \Resb \Ltwonorm{ \nabla \vh}{\el} 
+ h^{\alpha_2} \sumk \BResb \Ltwonorm{ \nabla \vh}{\el}  \leq C,
\end{align}
where $C$ is a positive constant dependent on the initial condition $\uu_0$. 
\end{corollary}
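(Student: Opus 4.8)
The first two terms on the left of \eqref{Eq-T-3.1-part3-BV} require no new work: they are exactly the temporal and spatial jump contributions already controlled in \eqref{Eq::temp-corr}, which follows from Theorem~\ref{Thm::entropy-stab} together with the spectral bounds \eqref{Eq::D_bound} and \eqref{Eq::u_v_bound}. So the entire task is to bound the two residual$\,\times\,$gradient sums in \eqref{Eq-T-3.1-part3-BV}, where $\nabla$ denotes the full space--time gradient. The starting points are the viscosity estimate \eqref{Eq::visc_bound} and the observation that the weighted seminorm $N_{n,K}:=\big(\int_{I_n}\int_K \inprod{\mbf{v}^h_t}{\tilde{\mbf{u}}_{\mbf{v}}\mbf{v}^h_t}+\sum_{k=1}^d\inprod{\mbf{v}^h_{x_k}}{\tilde{\mbf{u}}_{\mbf{v}}\mbf{v}^h_{x_k}}\df{x}\df{t}\big)^{1/2}$ appearing in the denominator of $\varepsilon_{n,K}$ in \eqref{Eq-D_SC} is comparable to $\|\nabla\mbf{v}^h\|_{L_2(I_n\times K)}$: since $\tilde{\mbf{u}}_{\mbf{v}}=\mbf{u}_{\mbf{v}}(\tilde{\mbf{v}}_{n,K})$ is evaluated at the cell average, which lies in the range covered by the (standing uniform $L_\infty$) control, the bound \eqref{Eq::u_v_bound} applies to it and yields $c\,\|\nabla\mbf{v}^h\|_{L_2(I_n\times K)}\le N_{n,K}\le C\,\|\nabla\mbf{v}^h\|_{L_2(I_n\times K)}$.

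Given this, the estimate is a short per-cell manipulation. By the very definition of $\varepsilon_{n,K}$ in \eqref{Eq-D_SC}, the numerator equals $\varepsilon_{n,K}\,(N_{n,K}+h^\theta)$; multiplying by $\|\nabla\mbf{v}^h\|_{L_2(I_n\times K)}$ and splitting gives
\begin{align*}
\big(h^{\alpha_1}C_1^{SC}\,\overline{Res}_{n,K}+h^{\alpha_2}C_2^{SC}\,\overline{BRes}_{n,K}\big)\,\|\nabla\mbf{v}^h\|_{L_2(I_n\times K)}
&= \varepsilon_{n,K}\,N_{n,K}\,\|\nabla\mbf{v}^h\|_{L_2(I_n\times K)} + \varepsilon_{n,K}\,h^\theta\,\|\nabla\mbf{v}^h\|_{L_2(I_n\times K)} \\
&\le C\,\varepsilon_{n,K}\,\|\nabla\mbf{v}^h\|^2_{L_2(I_n\times K)} + \frac{h^\theta}{c}\,\varepsilon_{n,K}\,N_{n,K},
\end{align*}
where the first term uses $N_{n,K}\le C\|\nabla\mbf{v}^h\|_{L_2(I_n\times K)}$ and the second uses $\|\nabla\mbf{v}^h\|_{L_2(I_n\times K)}\le N_{n,K}/c$. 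Finally $\varepsilon_{n,K}N_{n,K}\le \varepsilon_{n,K}(N_{n,K}+h^\theta)=h^{\alpha_1}C_1^{SC}\overline{Res}_{n,K}+h^{\alpha_2}C_2^{SC}\overline{BRes}_{n,K}$, so the second term is bounded by $\frac{1}{c}\big(C_1^{SC}h^{\alpha_1+\theta}\overline{Res}_{n,K}+C_2^{SC}h^{\alpha_2+\theta}\overline{BRes}_{n,K}\big)$.

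Summing over $n$ and $K$: the first batch $C\sum_{n,K}\varepsilon_{n,K}\|\nabla\mbf{v}^h\|^2_{L_2(I_n\times K)}$ is bounded by \eqref{Eq::visc_bound}; for the second batch I invoke Lemma~\ref{Lem-app-ineq}(i), which gives a uniform bound for $h^{\alpha_1+\theta}\sum_{n,K}\overline{Res}_{n,K}$ because $\alpha_1+\theta\ge (d'+\alpha_1)/2$, i.e.\ $\theta\ge (d'-\alpha_1)/2$, and Lemma~\ref{Lem-app-bnd-ineq}(i), which gives a uniform bound for $h^{\alpha_2+\theta}\sum_{n,K}\overline{BRes}_{n,K}$ because $\alpha_2+\theta\ge d'/2$, i.e.\ $\theta\ge d'/2-\alpha_2$ --- these being precisely the two conditions built into \eqref{Eq::tetabound}. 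Together with the jump terms from \eqref{Eq::temp-corr} this establishes \eqref{Eq-T-3.1-part3-BV} with a constant depending only on the projected initial data, hence on $\mbf{u}_0$.

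The computation is essentially mechanical; the one genuine subtlety --- and the reason the hypotheses take the form they do --- is the role of the regularization term $h^\theta$ in the denominator of $\varepsilon_{n,K}$: the ``$N_{n,K}$ part'' of the numerator$\,\times\,$gradient product is absorbed by the entropy/viscosity bound \eqref{Eq::visc_bound}, but the leftover ``$h^\theta$ part'' is not, and must instead be controlled directly via the a priori residual estimates of Lemmas~\ref{Lem-app-ineq} and~\ref{Lem-app-bnd-ineq}; condition \eqref{Eq::tetabound} is exactly the threshold on $\theta$ that makes those estimates applicable. A minor additional point is that \eqref{Eq::u_v_bound} is needed at the cell average $\tilde{\mbf{v}}_{n,K}$ rather than at $\mbf{v}^h(x,t)$, which is legitimate under the uniform $L_\infty$ assumption since $\tilde{\mbf{v}}_{n,K}$ then lies in the same compact set.
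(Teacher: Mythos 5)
Your proof is correct and follows essentially the same route as the paper: the first two terms are read off from \eqref{Eq::temp-corr}, and the residual terms are handled exactly as in part (ii) of Lemmas \ref{Lem-app-ineq} and \ref{Lem-app-bnd-ineq} (which the paper simply cites with $\gamma=\alpha_1$ and $\gamma=\alpha_2$), namely by splitting into a piece absorbed by the viscosity bound \eqref{Eq::visc_bound} and an $h^\theta$-weighted piece controlled by part (i) of those lemmas under condition \eqref{Eq::tetabound}. The only cosmetic difference is that you perform the split via the algebraic identity for the numerator of $\varepsilon_{n,K}$ and treat both residual terms at once, whereas the appendix uses the pointwise max inequality \eqref{Eq-Prop-Hilte} and treats them separately.
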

\begin{proof}
The first two terms of \eqref{Eq-T-3.1-part3-BV} are the same as \eqref{Eq::temp-corr}. The remaining terms can be obtained from Lemmas  \ref{Lem-app-ineq} and \ref{Lem-app-bnd-ineq}  by choosing $\gamma = \alpha_1$ and $\gamma = \alpha_2$  in part $(ii)$ of Lemmas \ref{Lem-app-ineq} and \ref{Lem-app-bnd-ineq}, respectively. The BV-estimate follows.
\end{proof}

Note that the spectral boundedness of the symmetrizer $\uu_\vv$ (and consequently $\vv_\uu$) as in \eqref{Eq::u_v_bound},  needs a deeper look.  In \cite{mythesis} it is shown that this seems achievable for some systems like shallow water equations and polytropic Euler equations by adopting some physical constraints
as well as $L_\infty$ bound on the approximate solution $\vh$. This is comparable to what Dutt \cite{dutt1988stable} established for the Navier-Stokes equations.

\section{Convergence analysis}
\label{sect::conv}
In the convergence analysis of SC-DG scheme \eqref{Eq-DGSC}, first in \S \ref{Sect:conv}  the convergence of the sequence of bounded solutions to a mv solution is proved. Then in \S \ref{Sect::cnsst} the admissibility of this solution is showed by satisfying some entropy inequality.

\subsection{Convergence to measure-valued solution}\label{Sect:conv}
In order to show convergence, we must  revisit and modify the proof given in  \cite{hiltebrand2014entropy} to account for the  removal of the streamline diffusion term. Furthermore, we employ refined estimates on several occasions, which leads to the less diffusive scaling of the shock-capturing operator  (cf. eq.~\eqref{Eq-D_SC}  in \S\ref{Sec:scop}).

First, let us introduce an $\Hone$-projection as the connection between infinite dimensional and finite dimensional space of the solution.
\begin{definition}\label{Def::H1proj}
The local  $\Hone$-projection of $\phiv \in \big(\Ccinf(\Omega \times \mathbb{R}_+)\big)^m$ into  $(\mathbb{P}_q)^m$ is denoted  by $\phivh$ and is defined as
$
\phivh = \proj_h(\phiv),
$
with $\proj_h \vert_{\el} \colon (\Hone(\el))^m \to (\mathbb{P}_q(\el))^m$;
where  for all $ \wh \in (\mathbb{P}_q(\el))^m$  we have
\begin{subequations} \label{Eq-H1prj}
\begin{align}
\int_{I_n} \! \int_K \! \inprod{\nabla \phiv^h}{\tilde{\uu}_\vv \nabla \wh} \dx \dt &=  \int_{I_n} \! \int_K \! \inprod{\nabla \phiv}{ \tilde{\uu}_\vv \nabla \wh} \dx \dt, \\
\int_{I_n} \!  \int_K \!  \phivh \dx \dt &= \int_{I_n} \! \int_K \! \phiv \dx \dt.
\end{align}
\end{subequations}
\end{definition}
Note that solving  \eqref{Eq-H1prj} corresponds to a discrete Neumann problem in $\el$. The regularity of the  solution of the elliptic problem and infinite
differentiability of $\phiv$ give the following estimates \cite{jaffre1995convergence}:
\begin{subequations}
\begin{align}
\Ltwonorm{\nabla \phivh}{\el} &\leq \Ltwonorm{\nabla \phiv}{\el}, \label{Eq-H1prjstab}\\ 
\Ltwonorm{ \phiv - \phivh}{\el} &\leq C  h^r \Ltwonorm{ \nabla^r \phiv}{\el}  \label{Eq-H1prjerr}, \\
\Ltwonorm{\phiv - \phivh}{\partial \el} &\leq C  h^{r-\frac{1}{2}} \Ltwonorm{\nabla^r \phiv }{\el}  \label{Eq-H1prjbndryerr},\\
\Ltwonorm{ \nabla (\phiv - \phivh)}{ \el} &\leq C  h^{r-1} \Ltwonorm{ \nabla^r \phiv}{\el}  \label{Eq-H1prjdrverr},
\end{align}
\end{subequations}
where $r=0,1, \ldots, q+1$. Note that \eqref{Eq-H1prjerr} and \eqref{Eq-H1prjbndryerr}) utilize $\Hm{2}$-regularity of the solution of the Neumann problem \eqref{Eq-H1prj}. This requires the convexity of the  triangulation $\mathcal{T}_n = \{ \el \}$.

Also we need the following estimates between $\Ltwo$  and $\Linf$. If $\phiv \in \big(\Ccinf(\el)\big)^m$, then the following estimates hold
\begin{subequations}
\begin{align}
\Ltwonorm{ \phiv}{\el} &\leq C h^{\frac{d'}{2}}  \Linfnorm{ \phiv}{\el}, \label{Eq::direct1}\\
\Hnorm{\phiv}{\el} &\leq C h^{\frac{d'}{2}} \Winfnorm{ \phiv }{\el} \label{Eq::direct2}.
\end{align}
\end{subequations}

In the following we assume that $q \geq 1$ (For the case $q = 0 $ our scheme reduces to a standard finite volume scheme for  which  convergence analysis is 
presented in \cite{cockburn1994error}).
The following theory establishes the convergence to mv solution for scheme \eqref{Eq-DGSC}:
\begin{theorem} \label{Thm::conv} Let $\vh$ be the approximate solution of the system \eqref{Eq::CL} by the shock capturing DG scheme \eqref{Eq-DGSC}. Under the assumption of \eqref{Eq::D_bound}, \eqref{Eq::u_v_bound} and 
\begin{equation}\label{Eq-inf-bnd}
\Linfnorm{ \vh}{\domt} \leq C,
\end{equation}
the approximate solution converges to a measure-valued solution \eqref{Eq-mvsol} of the system of conservation laws \eqref{Eq::CL}.
\end{theorem}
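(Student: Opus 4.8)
The plan is to extract a weak-$*$ convergent subsequence, identify its limit as a Young measure via Theorem~\ref{Thm-mvsol}, and pass to the limit in the variational identity \eqref{Eq-DGSC} tested against the $H^1$-projection of a fixed smooth, compactly supported test function. Since $\Vert\mbf{v}^h\Vert_{L_\infty}\le C$ by \eqref{Eq-inf-bnd} and $\mbf{u}(\cdot),\mbf{f}^k(\cdot)$ are continuous, the families $\{\mbf{u}(\mbf{v}^h)\}$ and $\{\mbf{f}^k(\mbf{v}^h)\}$ are uniformly bounded in $L_\infty$; Theorem~\ref{Thm-mvsol}, applied along a sequence $h=h_j\to0$, produces a subsequence and a probability measure $\bm{\mu}_y$, namely the push-forward under $\mbf{u}$ of the Young measure generated by $\mbf{v}^{h_j}$, with $\mbf{u}(\mbf{v}^{h_j})\overset{*}{\rightharpoonup}\mv{\bm{\sigma}}{\bm{\mu}_y}$ and $\mbf{f}^k(\mbf{v}^{h_j})\overset{*}{\rightharpoonup}\mv{\mbf{f}^k(\bm{\sigma})}{\bm{\mu}_y}$. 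Fixing $\bm{\varphi}$ as in \eqref{Eq-mvsol}, supported in the open space-time cylinder, and setting $\bm{\varphi}^h=\Pi_h(\bm{\varphi})\in\mathcal{V}^q$ (an admissible test function), we have $0=\mathcal{B}(\mbf{v}^h,\bm{\varphi}^h)=\mathcal{B}_{DG}(\mbf{v}^h,\bm{\varphi}^h)+\mathcal{B}_{SC}(\mbf{v}^h,\bm{\varphi}^h)$, so it suffices to show $\mathcal{B}_{SC}(\mbf{v}^h,\bm{\varphi}^h)\to0$ and $\mathcal{B}_{DG}(\mbf{v}^h,\bm{\varphi}^h)\to-\int\int(\inprod{\mv{\bm{\sigma}}{\bm{\mu}_y}}{\bm{\varphi}_t}+\sum_k\inprod{\mv{\mbf{f}^k(\bm{\sigma})}{\bm{\mu}_y}}{\bm{\varphi}_{x_k}})$; the identity then forces \eqref{Eq-mvsol} on $\bm{\mu}_y$.

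For the DG part I would, after a suitable integration by parts, compare $\mathcal{B}_{DG}(\mbf{v}^h,\bm{\varphi}^h)$ with the weak form $-\int\int(\inprod{\mbf{u}(\mbf{v}^h)}{\bm{\varphi}_t}+\sum_k\inprod{\mbf{f}^k(\mbf{v}^h)}{\bm{\varphi}_{x_k}})$ written cellwise through distributional integration by parts. The difference consists of (i) interior terms controlled by the $H^1$-projection error $\bm{\varphi}^h-\bm{\varphi}$ or its gradient, and (ii) interface and time-level terms in which --- once the single-valued-$\bm{\varphi}$ contributions cancel by conservativity of $\hat{\mbf{f}}$ and continuity of $\bm{\varphi}$ (and the $t=0,T$ terms drop since $\bm{\varphi}$, hence $\bm{\varphi}^h$, vanish there) --- one is left with the face quantities $\inprod{\hat{\mbf{f}}_e-\mbf{f}(\mbf{v}^h_{K,-})\cdot\mbf{n}}{\bm{\varphi}^h_{K,-}-\bm{\varphi}}$ and the time-level quantities $\inprod{\mbf{u}(\mbf{v}^h_{n,+})-\mbf{u}(\mbf{v}^h_{n,-})}{\bm{\varphi}^h_{n,+}-\bm{\varphi}}$. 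For (i): using \eqref{Eq-H1prjerr}, \eqref{Eq-H1prjdrverr}, an inverse estimate ($\Vert\nabla\mbf{v}^h\Vert_{L_2}\le Ch^{-1}$), and the assumption $q\ge1$ (so the projection estimates may be used with $r=2$), these are $O(h)$. For (ii): the flux and state increments are $O(\vert\llbracket\mbf{v}^h\rrbracket\vert)$ pointwise by Lipschitz continuity on the bounded set --- for $\hat{\mbf{f}}$ this uses consistency together with the viscosity form \eqref{Eq::diff_op}, the bound \eqref{Eq::D_bound} on $D$, and the representation \eqref{Eq::entrp-cnsrv-flux} of $\mbf{f}^*$ --- while the traces $\bm{\varphi}^h_{K,-}-\bm{\varphi}$ and $\bm{\varphi}^h_{n,+}-\bm{\varphi}$ are controlled by \eqref{Eq-H1prjbndryerr}; Cauchy--Schwarz combined with the weak BV-estimate \eqref{Eq-T-3.1-part3-BV} on the jumps of $\mbf{v}^h$ then makes (ii) of size $O(h^{1/2})$. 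Hence $\mathcal{B}_{DG}(\mbf{v}^h,\bm{\varphi}^h)$ agrees with $-\int\int(\inprod{\mbf{u}(\mbf{v}^h)}{\bm{\varphi}_t}+\sum_k\inprod{\mbf{f}^k(\mbf{v}^h)}{\bm{\varphi}_{x_k}})$ up to $o(1)$, and the latter converges to the claimed mv expression by the weak-$*$ convergence above (note $\bm{\varphi}_t,\bm{\varphi}_{x_k}\in L_1$).

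For the shock-capturing part, a weighted Cauchy--Schwarz applied cellwise to \eqref{Eq-B_SC}, together with $\varepsilon_{n,K}\mathcal{D}_{n,K}\le h^{\alpha_1}C_1^{SC}\overline{Res}_{n,K}+h^{\alpha_2}C_2^{SC}\overline{BRes}_{n,K}$ (where $\mathcal{D}_{n,K}$ is the square root in the denominator of \eqref{Eq-D_SC}, and the inequality follows by discarding the nonnegative $h^\theta$), the stability \eqref{Eq-H1prjstab} of $\Pi_h$, the upper bound in \eqref{Eq::u_v_bound}, and the direct estimate \eqref{Eq::direct2} (giving $\Vert\nabla\bm{\varphi}^h\Vert_{L_2(K\times I_n)}\le Ch^{d'/2}$), yields $\vert\mathcal{B}_{SC}(\mbf{v}^h,\bm{\varphi}^h)\vert\le Ch^{d'/2}\big(h^{\alpha_1}\sum_{n,K}\overline{Res}_{n,K}+h^{\alpha_2}\sum_{n,K}\overline{BRes}_{n,K}\big)$. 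By Lemma~\ref{Lem-app-ineq}(i) and Lemma~\ref{Lem-app-bnd-ineq}(i), $h^{(d'+\alpha_1)/2}\sum_{n,K}\overline{Res}_{n,K}$ and $h^{d'/2}\sum_{n,K}\overline{BRes}_{n,K}$ are uniformly bounded in $h$, so the right-hand side is $O(h^{\alpha_1/2}+h^{\alpha_2})\to0$ by \eqref{Eq::alfabound}. Adding the two pieces completes the proof; note that no upper bound on the regularization exponent $\theta$ enters --- it only keeps $\varepsilon_{n,K}$ finite --- which is precisely where the less diffusive scaling of $\varepsilon_{n,K}$ relative to \cite{hiltebrand2014entropy} is used.

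The hard part is step (ii). A crude estimate of the interface and time-level terms using only \eqref{Eq-inf-bnd}, e.g.\ bounding $\Vert\mbf{u}(\mbf{v}^h_{n,-})\Vert_{L_2(K)}$ by $C\vert K\vert^{1/2}\sim h^{d/2}$ and $\Vert\hat{\mbf{f}}_e\Vert_{L_2(I_n\times e)}$ by $\sim h^{(d'-1)/2}$, produces a quantity of size $O(1)$ that does \emph{not} vanish, since there are $\sim h^{-d'}$ interior faces and time-level/cell pairs. The remedy is to isolate a genuine jump $\llbracket\mbf{v}^h\rrbracket$ at every interface and time level --- square-summable with an $h$-independent bound thanks to \eqref{Eq-T-3.1-part3-BV} --- paired against the higher-order-small trace of the $H^1$-projection error; the cancellation that makes this possible relies on conservativity and consistency of the entropy-stable flux and on the defining relations \eqref{Eq-H1prj} of $\Pi_h$. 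I would finally observe that the upper spectral bound $\inprod{\mbf{w}}{\mbf{f_u}\mbf{w}}\le C\inprod{\mbf{w}}{\mbf{w}}$ needed to invoke Lemmas~\ref{Lem-app-ineq} and~\ref{Lem-app-bnd-ineq}, and the Lipschitz constants used throughout, are automatic from \eqref{Eq-inf-bnd} and the smoothness of the fluxes.
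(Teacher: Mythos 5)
Your proposal is correct and follows the same skeleton as the paper's proof: test \eqref{Eq-DGSC} with $\bm{\varphi}^h=\Pi_h(\bm{\varphi})$, show the shock-capturing contribution and all projection/interface errors vanish so that the cellwise weak form $\int\!\!\int(\inprod{\mbf{u}(\mbf{v}^h)}{\bm{\varphi}_t}+\sum_k\inprod{\mbf{f}^k(\mbf{v}^h)}{\bm{\varphi}_{x_k}})\to 0$, and then identify the limit via Theorem~\ref{Thm-mvsol}. Your treatment of $\mathcal{B}_{SC}$ is identical to the paper's (same bound $Ch^{d'/2+\alpha_1}\sum\overline{Res}_{n,K}+Ch^{d'/2+\alpha_2}\sum\overline{BRes}_{n,K}$, same appeal to Lemmas~\ref{Lem-app-ineq}(i) and \ref{Lem-app-bnd-ineq}(i), same rates $h^{\alpha_1/2}$, $h^{\alpha_2}$). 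Where you genuinely diverge is in the spatial interface terms. The paper keeps the split $\fl=\flent-\tfrac12 D\llbracket\mbf{v}^h\rrbracket^+_-$: the entropy-conservative part $\mathcal{B}^{(s1)}_{DG}$ is estimated by averaging over both sides of each face, using only the $L_\infty$ bound on $\mbf{f}^*$ against the $O(h^{3/2})$ boundary trace of the projection error (\eqref{Eq-H1prjbndryerr} with $r=2$, hence $O(h)$ and no BV input), while only the diffusion part $\mathcal{B}^{(s2)}_{DG}$ is paired with the weak BV-estimate and $r=1$ to give $O(h^{1/2})$. You instead perform an extra integration by parts so that the interior contribution becomes $\int\!\!\int\inprod{Res}{E(\bm{\varphi})}$ and the face leftover is the single quantity $\inprod{\fl-\mbf{f}(\mbf{v}^h_{K,-})\cdot\mbf{n}}{\bm{\varphi}^h_{K,-}-\bm{\varphi}}$, which you bound by $C\vert\llbracket\mbf{v}^h\rrbracket^+_-\vert$ using consistency of $\mbf{f}^*$, the mean-value representation \eqref{Eq::entrp-cnsrv-flux} and \eqref{Eq::D_bound}, then close with Cauchy--Schwarz, $r=1$, and \eqref{Eq-T-3.1-part3-BV}; likewise for the temporal jumps $\mbf{u}(\mbf{v}^h_{n,+})-\mbf{u}(\mbf{v}^h_{n,-})$. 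This is in fact the residual/boundary-residual structure the paper deploys only later, in the entropy-consistency proof (Theorem~\ref{Thm::entropy_cnsst}, parts f--g, with the super-approximation estimate in place of \eqref{Eq-H1prjbndryerr}). What your route buys is uniformity (one mechanism for all interface terms) and the need for only first-order boundary projection accuracy there; what it costs is an $O(h^{1/2})$ rather than $O(h)$ rate on the conservative-flux contribution and reliance on the BV-estimate (hence on \eqref{Eq::f_v_bound}) for a term the paper handles with the $L_\infty$ bound alone --- immaterial here since the overall rate is dominated by $h^{\alpha_1/2}$, and since, as you note, \eqref{Eq::f_v_bound} follows from \eqref{Eq-inf-bnd} and smoothness of the fluxes anyway. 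Your diagnosis in the final paragraph --- that a crude $L_\infty$ count over the $\sim h^{-d'}$ faces fails and a cancellation pairing jumps against projection-error traces is essential --- correctly identifies the crux of the argument.
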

\begin{proof}
Consider $\{\vh\}_{h > 0}$  as the sequence of approximate solutions generated by SC-DG scheme \eqref{Eq-DGSC}. We first show that as $h \to 0$ the approximate solution is consistent with weak solution \eqref{Eq::weaksol} in the following sense
\begin{equation}
\label{Eq::weakcnsst}
\lim_{h \to 0}\int_0^T \! \int_\Omega  \!  \inprod{ \uu(\vh)}{ \phiv_t} + \sum_{k=1}^d \inprod{\ff^k(\vh)}{ \phiv_{x_k}}  \dx \dt = 0, \qquad  \forall \phiv \in (\Ccinf( \dom  \times (0, T))^m.
\end{equation} 
The consistency \eqref{Eq::weakcnsst} combined with Theorem \ref{Thm-mvsol}, is the key to prove the weak-$*$ convergence to a measure-valued solution. 

Let us choose $\phiv \in (\Ccinf( \dom  \times (0, T))^m$ and $\phivh = \proj_h(\phiv) $ (as in Definition~\ref{Def::H1proj}) and define the \emph{internal} and \emph{boundary} parts of DG formulation as the following  
\begin{align} \label{Eq::internalDG} 
\BDG^{(int)}(\vh, \phivh) &= -\! \sumk \int_{I_n} \! \int_{K} \!  \inprod{ \uu(\vh)}{\phivh_t}  +  \sum^d_{k=1} \inprod{ \ff^k(\vh)}{ \phivh_{x_k}}  \dx \dt, \\
\label{Eq::boundaryDG} 
\BDG^{(bnd)}(\vh, \phivh) &= \sumk \int_{I_n} \! \int_{\partial K } \!   \inprod{\fl}{ \phivh_{K, -}} \dss \dt  \\ & \quad + \sumk \int_{K} \inprod{\uu(\vh_{n+1, -})}{ \phivh_{n+1, -}} - \inprod{\uu(\vh_{n, -})}{\phivh_{n, +}} \dx \nonumber .
\end{align}
Using \eqref{Eq::DG-quasilinear} and \eqref{Eq-DGSC} we note that 
\begin{align}\label{Eq::conv-decomp}
\mathcal{B}(\vh, \phivh  ) = \BDG^{(int)}(\vh, \phivh) + \BDG^{(bnd)}(\vh, \phivh) +  \BSC(\vh, \phivh).
\end{align}
To prove consistency we observe that
\begin{align}\label{Eq::temp_convgence}
&\int_0^T \! \int_\Omega  \!  \inprod{ \uu(\vh)}{ \phiv_t} + \sum_{k=1}^d \inprod{\ff^k(\vh)}{ \phiv_{x_k}}  \dx \dt  \\
& \qquad  = \sumk \int_{I_n} \int_K \Big( \inprod{ \uu(\vh)}{\phiv_t } + \sum_{k=1}^d \inprod{\ff^k(\vh)}{ \phiv_{x_k}} \Big) \dx \dt \nonumber \\
& \qquad = \BDG^{(int)}(\vh, \phivh-\phiv) - \BDG^{(int)}(\vh, \phivh) \nonumber  \\
& \qquad  = \BDG^{(int)}(\vh, \phivh-\phiv) - \mathcal{B}(\vh, \phivh  ) + \BDG^{(bnd)}(\vh, \phivh  ) + \BSC(\vh, \phivh) \nonumber ,
\end{align} 
and seek to prove that \eqref{Eq::temp_convgence} $\to 0$ as $h \to 0$ as \eqref{Eq::weakcnsst}. Recall that $\mathcal{B}(\vh, \phivh) \equiv 0$ by definition of SC-DG scheme in \eqref{Eq-DGSC} and we refer to \cite{hiltebrand2014entropy} for the proof of the limit of $\break \BDG^{(int)}(\vh, \phivh-\phiv)$ and $\BDG^{(bnd)}(\vh, \phivh  )$. Here we only discuss the last term in \eqref{Eq::temp_convgence}.

We decompose the shock capturing term \eqref{Eq-B_SC} as follows:
\begin{align} 
\BSC(\vh, \phivh) &= \BSC^{(1)}(\vh, \phivh) + \BSC^{(2)}(\vh, \phivh) \nonumber \\
& =\sumk \int_{I_n} \int_K ( \epsk^{(1)} + \epsk^{(2)}) \Big(\inprod{\phivh_t}{ \tilde{\uu}_\vv \vh_t} + \sum_{k=1}^d \inprod{\phivh_{x_k}}{ \tilde{\uu}_\vv \vh_{x_k}}  \Big) \dx \dt,
\end{align}
where $ \epsk^{(1)}$ and $ \epsk^{(2)}$ correspond to cell and boundary residual parts in the viscosity coefficient in \eqref{Eq-D_SC}, respectively.
First, considering $\BSC^{(1)}$  and using  \eqref{Eq::u_v_bound} yields
\begin{align}
 \epsk^{(1)}
\leq C \frac{h^{\alpha_1} \Resb}{ \Ltwonorm{ \nabla \vh}{\el}}. 
\end{align}
Therefore, by using \eqref{Eq::direct2} and Cauchy-Schwarz inequality we get to
\begin{align}\label{Eq-T--B_SC1_bnd}
\vert \BSC^{(1)} (\vh, \phivh)\vert  \leq C  h^{\alpha_1}  \sumk \Resb \Hnorm{ \phivh}{\el}  
  \leq C h^{\tfrac{d'}{2} + \alpha_1}   \sumk \Resb. 
\end{align}
Using  Lemma \ref{Lem-app-ineq}, we find that  $\BSC^{(1)}(\vh, \phivh)$ 
vanishes as $h\to 0$, if  $\alpha_1 >0$. 

Similarly for $\BSC^{(2)}$ we obtain
\begin{equation*}
\vert \BSC^{(2)} (\vh, \phivh)\vert   \leq C h^{\tfrac{d'}{2} + \alpha_2}  \sumk \BResb.
\end{equation*}
Using Lemma \ref{Lem-app-bnd-ineq} and choosing $\alpha_2 > 0$ one can see that $\BSC^{(2)} (\vh, \phivh)$ vanishes as $h \to 0$. 

Owing to the $\Linf$ bound on $\vh$ as \eqref{Eq-inf-bnd} and based on the result of Theorem \ref{Thm-mvsol} , we can claim that there is a Young measure $\bm{\mu}$, such that
\begin{align}
 \uu(\vh) &\overset{*}{\rightharpoonup} \mv{ \bm{\mu}_y}{\uu(\bm{\sigma})},  \label{Eq::u_mv-conv} \\ 
\ff^k(\vh) &\overset{*}{\rightharpoonup} \mv{\bm{\mu}_y}{ \ff^k(\bm{\sigma})}. \label{Eq::f_mv-conv}
\end{align} 
as $h \to 0$. In other words unlike weak solutions, nonlinearity in $\uu(\vv)$  or $\ff^k(\vv)$ commutes with this new sense of convergence. This establishes the convergence we look for;
by \eqref{Eq::temp_convgence}, \eqref{Eq::u_mv-conv} and \eqref{Eq::f_mv-conv} we obtain
\begin{align}
\lim_{h \to 0}&\int_0^T \! \int_\dom \!  \inprod{\uu(\vh)}{\phiv_t} + \sum_{k=1}^d \inprod{\ff^k(\vh)}{ \phiv_{x_k}}  \dx \dt \nonumber \\
=  &\int_0^T \! \int_\dom \! \inprod{ \mv{\uu(\bm{\sigma})}{\bm{\mu}_y}}{ \phiv_t} + \sum_{k=1}^d \inprod{ \mv{\ff^k(\bm{\sigma})}{ \bm{\mu}_y}}{ \phiv_{x_k}}  \dx \dt =0,
\end{align}
and the Theorem \ref{Thm::conv} follows. \qquad
\end{proof}

\subsection{Entropy consistency} \label{Sect::cnsst}
The remaining step is showing that the solution obtained by \eqref{Eq-DGSC} is admissible, i.e.\ satisfies \eqref{Eq-mv-entrpy}. 
Before stating the corresponding theorem we introduce the following \emph{super approximation} estimate or \emph{discrete commutator property}:
\begin{lemma}
Let $\vh \in \findim$  and $\varphi$ is an infinitely smooth function $\varphi \in\nobreak \mathcal{C}^\infty (\el)$. Then the following results hold
\begin{align}
\Ltwonorm{ \varphi \vh - \proj_h(\varphi \vh)}{\el} &\leq C(\varphi) h \Ltwonorm{ \vh}{\el} \label{Eq::super_region} \\
\Ltwonorm{ \varphi \vh - \proj_h(\varphi \vh)}{\partial \el} &\leq C(\varphi) h^{1/2} \Ltwonorm{ \vh}{\el} \label{Eq::super_bnd}
\end{align}  
\end{lemma} 
The proof of \eqref{Eq::super_region} is a special case of the proof presented in \cite{bertoluzza1999discrete} and the boundary estimate \eqref{Eq::super_bnd} can be proved along the same line as \eqref{Eq::super_region}.

The entropy consistency result is given as the following theorem:
\begin{theorem} \label{Thm::entropy_cnsst}  
Let $\vh$ be the approximate solution generated by the scheme \eqref{Eq-DGSC}. We assume that $\vh$ is uniformly bounded as in \eqref{Eq-inf-bnd} and  the conditions \eqref{Eq::D_bound} and \eqref{Eq::u_v_bound} hold. Then, the limit measure-valued solution $\bm{\mu}$ satisfies the entropy
condition \eqref{Eq-mv-entrpy}.
\end{theorem}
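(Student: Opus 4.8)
The plan is to mimic the convergence proof of Theorem~\ref{Thm::conv}, but now testing the scheme against $\bm{\varphi}^h = \Pi_h(\varphi \mbf{v}^h)$ for a scalar, non-negative test function $\varphi \in \mathcal{C}_c^\infty(\Omega \times \mathbb{R}_+)$, rather than against the $H^1$-projection of a vector test function. The starting point is the identity $\mathcal{B}(\mbf{v}^h, \Pi_h(\varphi \mbf{v}^h)) = 0$. The strategy is to show that, after using $\varphi$ as a weight, the bilinear form $\mathcal{B}_{DG}$ applied to $\Pi_h(\varphi \mbf{v}^h)$ reproduces (up to terms vanishing as $h \to 0$) the entropy production integral
\[
-\int_0^T \!\! \int_\Omega \Big( \varphi_t\, U(\mbf{u}(\mbf{v}^h)) + \sum_{k=1}^d \varphi_{x_k}\, F^k(\mbf{u}(\mbf{v}^h)) \Big)\df{x}\df{t},
\]
while all the remaining contributions — the shock-capturing term, the numerical-diffusion interface term, the entropy-conservative interface term, and the temporal jump term — are shown to have a \emph{sign} (non-negative after moving to the correct side) or to vanish in the limit. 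Passing to the limit via Theorem~\ref{Thm-mvsol} then gives exactly \eqref{Eq-mv-entrpy}.

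The key steps, in order, are as follows. First, I would substitute $\mbf{w}^h = \Pi_h(\varphi \mbf{v}^h)$ into \eqref{Eq-DGSC} and, using the discrete commutator property \eqref{Eq::super_region}–\eqref{Eq::super_bnd}, replace $\Pi_h(\varphi \mbf{v}^h)$ by $\varphi \mbf{v}^h$ everywhere up to controllable errors; the $H^1$-type errors are $O(h)$ in interior integrals and $O(h^{1/2})$ on faces, multiplied by bounded quantities, so they vanish. Second, for the interior DG term I would integrate by parts back, so that the volume integral involves $\langle \mbf{u}(\mbf{v}^h), (\varphi \mbf{v}^h)_t\rangle + \sum_k \langle \mbf{f}^k(\mbf{v}^h), (\varphi \mbf{v}^h)_{x_k}\rangle$; expanding the product rule, the terms where the derivative hits $\mbf{v}^h$ combine, using $\langle \mbf{v}^h, \mbf{u}(\mbf{v}^h)_t\rangle = U(\mbf{u}(\mbf{v}^h))_t$ and $\langle \mbf{v}^h, \mbf{f}^k(\mbf{v}^h)_{x_k}\rangle = F^k(\mbf{u}(\mbf{v}^h))_{x_k}$ (the chain rule \eqref{Eq::entropyvar-entrpy-ineq} applied cellwise), into $\varphi(U_t + \sum_k F^k_{x_k})$, which integrates by parts against the \emph{smooth} $\varphi$ to produce the desired entropy-production integral plus cell-boundary entropy-flux terms; the terms where the derivative hits $\varphi$ stay as the wanted $\varphi_t U + \sum_k \varphi_{x_k} F^k$ contributions after the same manipulation. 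Third, the cell-boundary entropy-flux terms generated this way must be matched against the interface numerical-flux term $\langle \fl, \varphi \mbf{v}^h_{K,-}\rangle$: using the viscosity form \eqref{Eq::diff_op}, the entropy-conservativity identity \eqref{Eq:entrpy-cnsrv}, the definition \eqref{Eq-T-duality_F} of the entropy potential, and summing over the two sides of each interface, one finds that the entropy-conservative part cancels the entropy-flux jump up to a consistency error controlled by $\overline{BRes}$ and the jump $\llbracket\mbf{v}^h\rrbracket$ (hence vanishing by the weak BV-estimate \eqref{Eq-T-3.1-part3-BV} and Lemma~\ref{Lem-app-bnd-ineq}), while the diffusion part $-\tfrac12 D\llbracket \mbf{v}^h\rrbracket$ contributes, after the $\varphi$-weighted symmetrization, a term of the form $+\tfrac12\sum \int \varphi\, \langle \llbracket\mbf{v}^h\rrbracket, D\llbracket\mbf{v}^h\rrbracket\rangle \ge 0$ (since $\varphi \ge 0$ and $D$ positive definite) plus a vanishing remainder. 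Fourth, for the temporal jump term I would use Lemma~\ref{Rem-Bath2006}: the jump contribution telescopes against $\int \varphi_n U$-type terms and leaves $-\sum \int_K \varphi_n R \le 0$, i.e.\ a term with the correct sign. Fifth, the shock-capturing term $\mathcal{B}_{SC}(\mbf{v}^h, \varphi \mbf{v}^h)$: write it with the weight $\varphi$, split off $\varphi$ times the (non-negative) quadratic form appearing in the denominator of $\varepsilon_{n,K}$ — this is non-negative since $\varphi \ge 0$ and $\tilde{\mbf{u}}_\mbf{v}$ is SPD — and show the remaining cross-terms (where a derivative of $\varphi$ appears) vanish by the same Cauchy–Schwarz / Lemma~\ref{Lem-app-ineq} and \ref{Lem-app-bnd-ineq} arguments used in part a) of Theorem~\ref{Thm::conv}. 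Finally, collecting signs, everything that does not vanish is non-negative and sits on the correct side of the inequality, so dividing by nothing and passing $h \to 0$ using \eqref{Eq::u_mv-conv}, \eqref{Eq::f_mv-conv} together with the weak-$*$ convergence of $U(\mbf{u}(\mbf{v}^h))$ and $F^k(\mbf{u}(\mbf{v}^h))$ to $\langle U(\bm{\sigma}), \bm{\mu}_y\rangle$ and $\langle F^k(\bm{\sigma}), \bm{\mu}_y\rangle$ (continuous functions of $\mbf{v}^h$, so Theorem~\ref{Thm-mvsol} applies) yields \eqref{Eq-mv-entrpy}.

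The main obstacle I expect is the careful bookkeeping in the third step: correctly pairing the cell-boundary entropy-flux terms produced by the cellwise chain rule with the interface numerical-flux terms, keeping track of the weight $\varphi$ on \emph{which} trace ($K,-$ versus $K,+$) and absorbing the difference $\varphi_{K,+}-\varphi_{K,-}=O(h)$ of the weight across the interface, and then isolating the genuinely sign-definite diffusion contribution $\tfrac12\int\varphi\langle\llbracket\mbf{v}^h\rrbracket,D\llbracket\mbf{v}^h\rrbracket\rangle$ from the consistency error $\langle \flent - \mbf{f}(\mbf{v}^h_{K,-})\cdot\mbf{n}, \cdot\rangle$. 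This is exactly the place where the entropy-conservative/entropy-stable flux structure \eqref{Eq:entrpy-cnsrv}–\eqref{Eq::entrp-cnsrv-flux} is essential, and where the boundary residual $\overline{BRes}_{n,K}$ was designed (in \eqref{Eq-BRes}) to capture precisely the leftover terms; the refined scaling $\alpha_2>0$ together with Lemma~\ref{Lem-app-bnd-ineq} then guarantees these remainders vanish. A secondary technical point is verifying that the discrete commutator estimates genuinely control all the projection errors $\varphi\mbf{v}^h - \Pi_h(\varphi\mbf{v}^h)$ appearing after integration by parts, including the gradient of this error in the shock-capturing and interior terms, which requires the $H^1$-stability \eqref{Eq-H1prjstab} of $\Pi_h$ in addition to \eqref{Eq::super_region}–\eqref{Eq::super_bnd}.
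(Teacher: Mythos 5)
Your overall route is the same as the paper's: test with $\Pi_h(\varphi\mbf{v}^h)$, split off the projection (commutator) error, recover the entropy production integral from the volume terms via the cellwise chain rule and the entropy potential identities \eqref{Eq-T-duality_F}--\eqref{Eq:entrpy-cnsrv}, use positive definiteness of $D$ and $\varphi\ge 0$ for the interface sign, Lemma~\ref{Rem-Bath2006} for the temporal jumps, positivity of the $\varphi$-weighted quadratic form for the shock-capturing term, and the super-approximation estimates together with Lemmas~\ref{Lem-app-ineq} and \ref{Lem-app-bnd-ineq} for the remainders. Three points in your sketch are, however, either wrong as stated or missing the idea that makes the step work.

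First, the interior commutator term does not vanish merely because the super-approximation error is $O(h)$ ``times bounded quantities'': after Cauchy--Schwarz it is bounded by $C\,h^{1+d'/2}\Vert\mbf{v}^h\Vert_{L_\infty}\sum_{n,K}\overline{Res}_{n,K}$, and $\sum_{n,K}\overline{Res}_{n,K}$ is \emph{not} uniformly bounded --- only $h^{(d'+\alpha_1)/2}\sum_{n,K}\overline{Res}_{n,K}$ is, by Lemma~\ref{Lem-app-ineq}(i). Requiring $1+d'/2>(d'+\alpha_1)/2$ is exactly the design constraint $\alpha_1<2$ in \eqref{Eq::alfabound}; without invoking it this step does not close. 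Second, for the shock-capturing part of the compensation term your proposed control via $H^1$-stability plus a gradient commutator bound is not obviously sufficient: Cauchy--Schwarz there yields $\varepsilon_{n,K}\Vert\nabla E(\varphi\mbf{v}^h)\Vert\,\Vert\nabla\mbf{v}^h\Vert$ with $\Vert\nabla E(\varphi\mbf{v}^h)\Vert$ of the same order as $\Vert\nabla\mbf{v}^h\Vert$, so you only recover boundedness, not smallness. The paper avoids this entirely: by the defining property \eqref{Eq-H1prj} of the $H^1$-projection (tested with $\mbf{w}^h=\mbf{v}^h$) one has $\mathcal{B}_{SC}(\mbf{v}^h,\varphi\mbf{v}^h-\Pi_h(\varphi\mbf{v}^h))=0$ identically, which is precisely why that projection was chosen. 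Third, the temporal leftover must be \emph{non-negative} for the argument to close, i.e.\ $\mathcal{B}^{(t)}_{DG}(\mbf{v}^h,\mbf{v}^h\varphi)\ge -\int U\varphi_t$; it equals $+\sum_{n,K}\int_K\varphi^n\int_0^1\theta\inprod{\llbracket\mbf{v}^h_{n}\rrbracket^+_-}{\mbf{u_v}(\mbf{v}^h(\theta))\llbracket\mbf{v}^h_{n}\rrbracket^+_-}\df{\theta}\df{x}\ge 0$, not $-\sum\int\varphi_n R\le 0$ as you wrote; with your sign the inequality points the wrong way. Finally, two of your anticipated obstacles are non-issues: $\varphi$ is smooth, so there is no $O(h)$ mismatch of the weight across faces, and the entropy-conservative interface contribution cancels the potential jump \emph{exactly} on interior faces by \eqref{Eq:entrpy-cnsrv} --- $\overline{BRes}$ enters only in the compensation term, not in the sign argument.
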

\begin{proof}
We follow \cite{hiltebrand2014entropy} and consider an infinitely smooth non-negative  function $\allowbreak 0 \leq \nobreak \varphi \in\nobreak \Ccinf (\dom \times (0, T))$. 
Also in order that $\vh \varphi$ can be inserted as the test function in quasi-linear form $\mathcal{B}$, it needs to be projected to the finite dimensional space $\findim$. This is done using the $\Hone$-projection operator \eqref{Eq-H1prj} and results in the following two terms 
\begin{equation}\label{Eq::cnsst_projdecom}
\mathcal{B}(\vh, \proj_h(\vh \varphi)) =  \mathcal{B}(\vh, \vh \varphi) + \mathcal{B}(\vh, \proj_h(\vh \varphi) - \vh \varphi).
\end{equation}
As we will show, the second term, which is called \emph{compensation term},  vanishes as $h$ goes to zero while the first one provides us with the entropy inequality condition \eqref{Eq-mv-entrpy}. 

The first term can be decomposed in naive DG and shock capturing parts as
\begin{equation}\label{Eq::cnsst-decom}
\mathcal{B}(\vh, \vh\varphi) = \BDG(\vh, \vh \varphi) + \BSC(\vh, \vh\varphi).
\end{equation}
Along the same lines as in \cite{hiltebrand2014entropy}, one can prove that 
\begin{align}\label{Eq::cnsst-BDG}
\BDG(\vh, \vh\varphi) \geq -  \int^T_0 \! \int_\dom U(\vh) \varphi_t + \sum_{k=1}^d F^k(\vh) \varphi_{x_k} \dx \dt.
\end{align}
We do not repeat the proof here and  refer to \cite{hiltebrand2014entropy} for details. 
The shock capturing part, using  \eqref{Eq-B_SC}, can be written as 
\begin{align}\label{T-cnsst-Eq-B_SC}
\BSC(\vh, \vh\varphi) 
=&\sumk \int_{I_{n}} \int_{K} \epsk \Big(\inprod{ \vh_t}{ \tilde{\uu}_\vv \vh_t } + \sum^d_{k=1} \inprod{ \vh_{x_k}}{ \tilde{\uu}_\vv \vh_{x_k}} \Big) \varphi \dx \dt \nonumber \\ 
 &+ \underbrace{\sumk \int_{I_{n}} \int_{K} \epsk \Big(\inprod{ \vh \varphi_t}{ \tilde{\uu}_\vv \vh_t } + \sum^d_{k=1} \inprod{ \vh \varphi_{x_k}}{ \tilde{\uu}_\vv \vh_{x_k} } \Big) \dx \dt}_{A} \geq A,
\end{align}
and similar to \eqref{Eq-T--B_SC1_bnd} one can deduce 
that $ \vert A\vert \to 0 \text{ as } h \to 0 $. From \eqref{Eq::cnsst-decom}, \eqref{Eq::cnsst-BDG} and \eqref{T-cnsst-Eq-B_SC} we have
\begin{align*}
\mathcal{B}(\vh, \vh\varphi) \geq -  \int^T_0 \! \int_\dom  U(\vh) \varphi_t + \sum_{k=1}^d F^k(\vh) \varphi_{x_k}  \dx \dt  +	 A. 
\end{align*}
As $h \to 0$, $A$ vanishes and remembering the arguments on weak-$*$ convergence in Theorem~\ref{Thm::conv} yields
\begin{equation}\label{Eq::entropypart1}
\lim_{h \to 0} \mathcal{B}(\vh, \vh\varphi)) \geq - \int^T_0 \! \int_\dom \varphi_t \mv{U(\bm{\sigma})}{\bm{\mu}_y} + \sum^d_{k=1} 
\varphi_{x_k} \mv{F^k(\bm{\sigma})}{\bm{\mu}_y}   \dx \dt. 
\end{equation}

Now we deal with the compensation term in \eqref{Eq::cnsst_projdecom} which contains the projection error, 
\begin{equation*}
\cnsserr := \err{\vh \varphi} = \vh \varphi - \proj_h(\vh \varphi),
\end{equation*}
and in the following we  show that the compensation term vanishes as $h$ goes to zero: 
\begin{align} 
\label{Eq::compenserr}
\lim_{h \to 0} \mathcal{B}(\vh, \cnsserr) = 0. 
\end{align}

One can decompose $\mathcal{B}(\vh, \cnsserr)$ as  follows
\begin{align}
\label{Eq::entcnsst_decomp}
\mathcal{B}(\vh, \cnsserr) = \BDG^{(\Res)}(\vh, \cnsserr) + \BDG^{(rem)}(\vh, \cnsserr) + \BSC(\vh, \cnsserr),
\end{align}
with the following definitions
\begin{align}
\label{Eq::entcnsst:Res}
\BDG^{(\Res)}(\vh, \cnsserr) &= \ds \sumk \int_{I_n} \int_K \inprod{ Res}{ \err{\vh \varphi}} \dx \dt, \\
\label{Eq::entcnsst:rem}
\BDG^{(rem)}(\vh, \cnsserr)  &= \sumk \int_K \inprod{ \uu(\vh_{n, +}) - \uu(\vh_{n, -})}{ (\cnsserr)_{n, +} } \dx \\ 
& \quad - \sumk \int_{I_n} \int_{\partial K} \!\inprod{ \fl - \ff(\vh_{K, -})\cdot\mbf{n}}{
 (\cnsserr)_{K, -}}  \dss \dt \nonumber.
\end{align}
Now we need to show that each term in \eqref{Eq::entcnsst_decomp} vanishes as $h \to 0$.

First, the definition of $\Hone$-projection \eqref{Eq-H1prj} obviously yields 
\begin{equation}\label{Eq::SCcompenserr}
\BSC(\vh, \cnsserr ) = 0.
\end{equation} 
Using the definition of $\BDG^{(\Res)}$ as \eqref{Eq::entcnsst:Res} combined with \eqref{Eq::super_region} and  \eqref{Eq::direct1} gives
\begin{align}\label{Eq::Rescompenserr}
\vert \BDG^{(\Res)}(\vh, \cnsserr)  \vert & \leq C
\sumk \Resb \Ltwonorm{ \vh \varphi - \proj_h(\vh \varphi)}{\el} \nonumber \\
& \leq  C h^{1+ \frac{d'}{2}} \Linfnorm{ \vh}{\domt} \sumk  \Resb,  
\end{align}
which vanishes as $h$ goes to zero if $\alpha_1 < 2$. This comes from Lemma \ref{Lem-app-ineq} with $\gamma = 1 + \frac{d'}{2}$.

Moreover for $\BDG^{(rem)}$, by definition  \eqref{Eq-BRes} and estimates \eqref{Eq::super_bnd} and \eqref{Eq::direct1}, we obtain
\begin{align}
\label{Eq::remcompenserr}
\vert \BDG^{(rem)}(\vh, \cnsserr) \vert &\leq C \sumk \BResb \Ltwonorm{ \vh \varphi - \proj_h(\vh \varphi)}{\partial \el} \nonumber \\ 
& \leq Ch^{1/2}  \Linfnorm{ \vh }{\domt} \Big( h^\frac{d'}{2} \sumk  \BResb \Big).
\end{align}

Using Lemma \ref{Lem-app-bnd-ineq}, we observe that $\BDG^{(rem)}(\vh, \cnsserr)$ vanishes as $h$ goes to zero. 

Combining \eqref{Eq::SCcompenserr}, \eqref{Eq::Rescompenserr} and \eqref{Eq::remcompenserr} one can show \eqref{Eq::compenserr}. Then using \eqref{Eq::cnsst_projdecom}, \eqref{Eq::entropypart1} and \eqref{Eq::compenserr} yields
\begin{align*}
0 \equiv \lim_{h \to 0 } \mathcal{B}(\vh, \proj(\vh \varphi)) \geq - \int^T_0 \! \int_\dom \varphi_t \mv{U(\bm{\sigma})}{\bm{\mu}_y} + \sum^d_{k=1} 
\varphi_{x_k} \mv{F^k(\bm{\sigma})}{\bm{\mu}_y}   \dx \dt. 
\end{align*} 
This proves the entropy consistency introduced in \eqref{Eq-mv-entrpy}. \qquad
\end{proof}

\section{Numerical experiments}
\label{sec::numeric}
In this section we present some numerical experiments. First, in section~\ref{sec:wave}, we solve  a linear system of the one dimensional wave equation to show that the order of  convergence is optimal,  and the presence of the SC term does not ruin it. In this case (and in general for linear symmetrizable systems) the solution converges to the unique entropy solution. For more discussion on this claim we refer to Theorem 4.7 in \cite{hiltebrand2014entropy}.  
As examples of more general systems, we present in section~\ref{sec:shalloweq} a numerical solution of the dam break test case for the shallow water equations, as well as solutions of the  one dimensional Sod and Lax shock tube for the Euler equations in section~\ref{sec:eulereq}.

It is worth mentioning that the goal of presenting these results is to show that our proposed scheme can give acceptable results in practice. This section is not meant to verify the  analytical claim of convergence to emv solutions of one dimensional Euler or shallow water equations, respectively. The numerical proof of convergence to emv solution should be considered in some statistical approach, see \cite{kaman2012uncertainty} (and references therein) or \cite{fjordholm2014construction}.

The Netgen/Ngsolve library \cite{schoberl1997netgen} has been used for geometry handling and mesh generation as well as quadrature rules and the evaluation of basis functions. The nonlinear system obtained from the implicit space-time scheme is solved using damped Newton method utilizing the ILU preconditioned GMRES available through the PETSc library \cite{balay2014petsc}. 

There are some free parameters in the scheme which need to be selected including $C^{1,2}_{SC}, \alpha_{1, 2}$ and $\theta$. Unless otherwise mentioned explicitly we set them as $C^{1}_{SC} = 1$ and $\theta = 0.5$. Since our analytical results indicate that any  $\alpha_2>0$ can be chosen, we have set $C^{2}_{SC} = 0$. The value of $\alpha_{1}$ is set to $1.5$ in the case of wave equation and $1.3$ in the rest to be more diffusive. 
These settings give us acceptable result in most cases.

It should be noted that in the presented figures of the solution we draw the original solution polynomial elementwise without any additional limitation.

\subsection{Wave equation}
\label{sec:wave}
The wave equation in one dimension can be written as the following form
\begin{align}
h_t + c u_x &= 0, \\
u_t + c h_x &= 0,
\end{align}
where $c$ is some constant value. In this case the system is linear and symmetric in its original form and by choosing the entropy function as $U(\mbf{u}) = \frac{1}{2} (h^2 + u^2)$ the entropy variables would be the same as  the conservative variables.

Hence, the entropy conservative flux would be the simple average of the flux values at the edge and the diffusion operator is set to Rusanov type.
In our numerical test cases the boundary conditions are set to Dirichlet, the wave speed to $c = 1$ and the final time to $T = 1$. Also the calculation domain is considered as ${[0, 3]}$.
We use two different initial settings:
\subsubsection{Wave equation: smooth initial data}
We consider 
\begin{equation}
\label{Eq::wave_init_sm}
h(x, 0) = \sin(2 \pi x), \qquad u(x, 0) = \sin(2 \pi x) /3. 
\end{equation}
We solve this for polynomial degrees $q = 0, 1, 2, 3$  with and without shock capturing. The results are presented in 
Tables \ref{Tab::wave_smooth_no} and \ref{Tab::wave_smooth_SC}. One can observe that the naive DG formulation (i.e without any stabilization) is sufficiently good in this smooth case. Adding the shock capturing term merely adds some diffusive behaviour (in terms of slightly larger error reported in Table \ref{Tab::wave_smooth_SC}), while it does not affect the accuracy of the scheme in terms of rate-of-convergence. Asymptotically, we get the optimal order $q+1$ in convergence of the error in $L_1$ norm, even in the presence of the SC term. 

Only for very coarse meshes and high polynomial degree we see a significant contamination of the accuracy (cf. last column of Table \ref{Tab::wave_smooth_SC} 
). With refining the mesh, however, the order of convergence  will be  the order of consistency of the scheme, and the error levels are not significantly compromised by the SC term. 
\begin{table}[H]
\centering 
\caption{Convergence result for wave equation, smooth initial data without SC}
\label{Tab::wave_smooth_no}
\begin{tabular}{|c | *2c | *2c | *2c | *2c | }
\hline
  &  \multicolumn{2}{c}{$q=0$}  & \multicolumn{2}{c}{$q=1$} & \multicolumn{2}{c}{$q=2$} & \multicolumn{2}{c}{$q=3$} \vline \\
  \hline
h & $\Vert e \Vert_{L_1}$ & order & $\Vert e \Vert_{L_1}$ & order & $\Vert e \Vert_{L_1}$ & order & $\Vert e \Vert_{L_1}$ & order \\ 
  \hline
$\frac{1}{10}$ & 1.869  &&  4.668e-2  && 8.073e-3 && 1.997e-4 & \\
$\frac{1}{20}$ & 1.597   & 0.226 &  2.941e-2& 0.666 & 2.114e-3 & 1.933 & 1.079e-4  & 0.888 \\ 
$\frac{1}{40}$ & 1.146   & 0.477 & 6.328e-3 & 2.217 & 3.306e-4 & 2.677 & 6.994e-6 & 3.947\\
$\frac{1}{80}$ & 7.146e-1   & 0.682 & 1.410e-3 & 2.165 & 3.758e-5 & 3.137 & 3.788e-7 & 4.206\\
$\frac{1}{160}$ & 4.044e-1 & 0.821   & 3.243e-4 & 2.121 & 4.344e-6 & 3.113 & 2.099e-8 & 4.173\\
\hline
\end{tabular}
\end{table}

\begin{table}[H]
\centering 
\caption{Convergence result for wave equation, smooth initial data with SC}
\label{Tab::wave_smooth_SC}
\begin{tabular}{|c | *2c | *2c | *2c | *2c | }
\hline
  &  \multicolumn{2}{c}{$q=0$}  & \multicolumn{2}{c}{$q=1$} & \multicolumn{2}{c}{$q=2$} & \multicolumn{2}{c}{$q=3$}  \vline \\
  \hline
h & $\Vert e \Vert_{L_1}$ & order & $\Vert e \Vert_{L_1}$ & order & $\Vert e \Vert_{L_1}$ & order & $\Vert e \Vert_{L_1}$ & order \\ 
  \hline
$\frac{1}{10}$ & 1.869  &&  2.538e-1  && 2.570e-1 && 1.952e-1 & \\
$\frac{1}{20}$ & 1.597   & 0.226 &  5.862e-2 & 2.114 & 1.780e-2 & 3.852 & 1.483e-2  & 3.718 \\ 
$\frac{1}{40}$ & 1.146   & 0.477 & 9.690e-3 & 2.596 & 6.467e-4 & 4.782 & 1.784e-5 & 9.698\\
$\frac{1}{80}$ & 7.146e-1   & 0.682 & 1.748e-3 & 2.470 & 4.997e-5 & 3.694 & 6.120e-7 & 4.865\\
$\frac{1}{160}$ & 4.044e-1 & 0.821   & 3.537e-4 & 2.305 & 4.754e-6 & 3.393 & 2.562e-8 & 4.578\\
\hline
\end{tabular}
\end{table}

\subsubsection{Wave equation: discontinuous initial data}
We consider 
\begin{equation}
\label{Eq::wave_init_di}
(h, u)|_{t=0} = \begin{cases}
(1,\frac{1}{3}), \quad x < 1.5, \\ (0, 0), \quad x > 1.5.
\end{cases}
\end{equation}
Like the smooth case we solve the problem with different polynomial degree, and both with and without shock capturing. As we expect, based on Tables \ref{Tab::wave_disc_no} and \ref{Tab::wave_disc_SC} 
,  due to presence of a discontinuity, the order of convergence cannot be better than $1$, while increasing $q$ results in lower error.

Again, as in the smooth case, the presence of shock capturing mechanism does not affect the order of convergence. On the other hand, as Figure \ref{Fig::wave_disc_SC01} shows, while the naive DG implementation shows lots of oscillations in the solution in the vicinity of discontinuities, using the SC term reduces those oscillations considerably. Moreover, comparing the result of linear and quadratic elements shows that using higher order polynomials significantly helps in controlling the overshoot.

The effect of shock capturing parameters $\alpha_1$ and $\theta$ is shown in Figure \ref{Fig::wave_disc_alftth}. We observe that by increasing $\alpha_1$ and decreasing $\theta$ the solution become less diffusive. This shows that  the extension of the admissible range for $\alpha_1$ in our work compared to \cite{hiltebrand2014entropy} can make the method less diffusive. 

\begin{table}[H]
\centering 
\caption{Convergence result for wave equation, discontinuous initial data without SC}
\label{Tab::wave_disc_no}
\begin{tabular}{|c | *2c | *2c | *2c | *2c | }
\hline
  &  \multicolumn{2}{c}{$q=0$}  & \multicolumn{2}{c}{$q=1$} & \multicolumn{2}{c}{$q=2$} & \multicolumn{2}{c}{$q=3$} \vline \\
  \hline
h & $\Vert e \Vert_{L_1}$ & order & $\Vert e \Vert_{L_1}$ & order & $\Vert e \Vert_{L_1}$ & order & $\Vert e \Vert_{L_1}$ & order \\ 
  \hline
$\frac{1}{10}$ & 4.279e-1&		&  		1.211e-1  && 6.241e-2 & & 4.021e-2 & \\
$\frac{1}{20}$ & 3.435e-1& 	0.317 &  7.444e-2  & 0.702 & 4.311e-2 & 0.534 & 3.425e-2 & 0.231 \\
$\frac{1}{40}$ & 2.554e-1&	0.427&  4.466e-2  &0.737& 2.464e-2 & 0.807 & 1.908e-2  & 0.844 \\
$\frac{1}{80}$ & 1.835e-1  & 0.477 &  2.674e-2 &0.740 & 1.429e-2 & 0.786 & 1.089e-2 & 0.809\\
$\frac{1}{160}$ & 1.304e-1  &	0.493 &  1.569e-2  &0.769 & 8.168e-3 & 0.807& 6.157e-3 & 0.823\\
\hline
\end{tabular}
\end{table}

\begin{table}[H]
\centering 
\caption{Convergence result for wave equation, discontinuous initial data with SC}
\label{Tab::wave_disc_SC}
\begin{tabular}{|c | *2c | *2c | *2c | *2c | }
\hline
  &  \multicolumn{2}{c}{$q=0$}  & \multicolumn{2}{c}{$q=1$} & \multicolumn{2}{c}{$q=2$} & \multicolumn{2}{c}{$q=3$} \vline  \\
  \hline
h & $\Vert e \Vert_{L_1}$ & order & $\Vert e \Vert_{L_1}$ & order & $\Vert e \Vert_{L_1}$ & order & $\Vert e \Vert_{L_1}$ & order \\ 
  \hline
$\frac{1}{10}$ & 4.279e-1&		&  		1.207e-1  && 8.421e-2 & & 7.577e-2 & \\
$\frac{1}{20}$ & 3.435e-1& 	0.317 &  7.891e-2  & 0.613 & 5.518e-2 & 0.609 & 4.495e-2 & 0.753 \\
$\frac{1}{40}$ & 2.554e-1&	0.427&  4.628e-2  &0.769& 3.003e-2 & 0.878 & 2.305e-2  & 0.963 \\
$\frac{1}{80}$ & 1.835e-1  & 0.477 &  2.732e-2 &0.761 & 1.659e-2 & 0.856 & 1.252e-2 & 0.882\\
$\frac{1}{160}$ & 1.304e-1  & 0.493 &  1.586e-2 &0.784 & 9.371e-3 & 0.824 & 6.813e-3 & 0.876\\
\hline
\end{tabular}
\end{table}

\begin{figure}[htbp]
    \centering
    \begin{subfigure}[t]{0.49\textwidth}
        \centering
        \includegraphics[width=\textwidth]{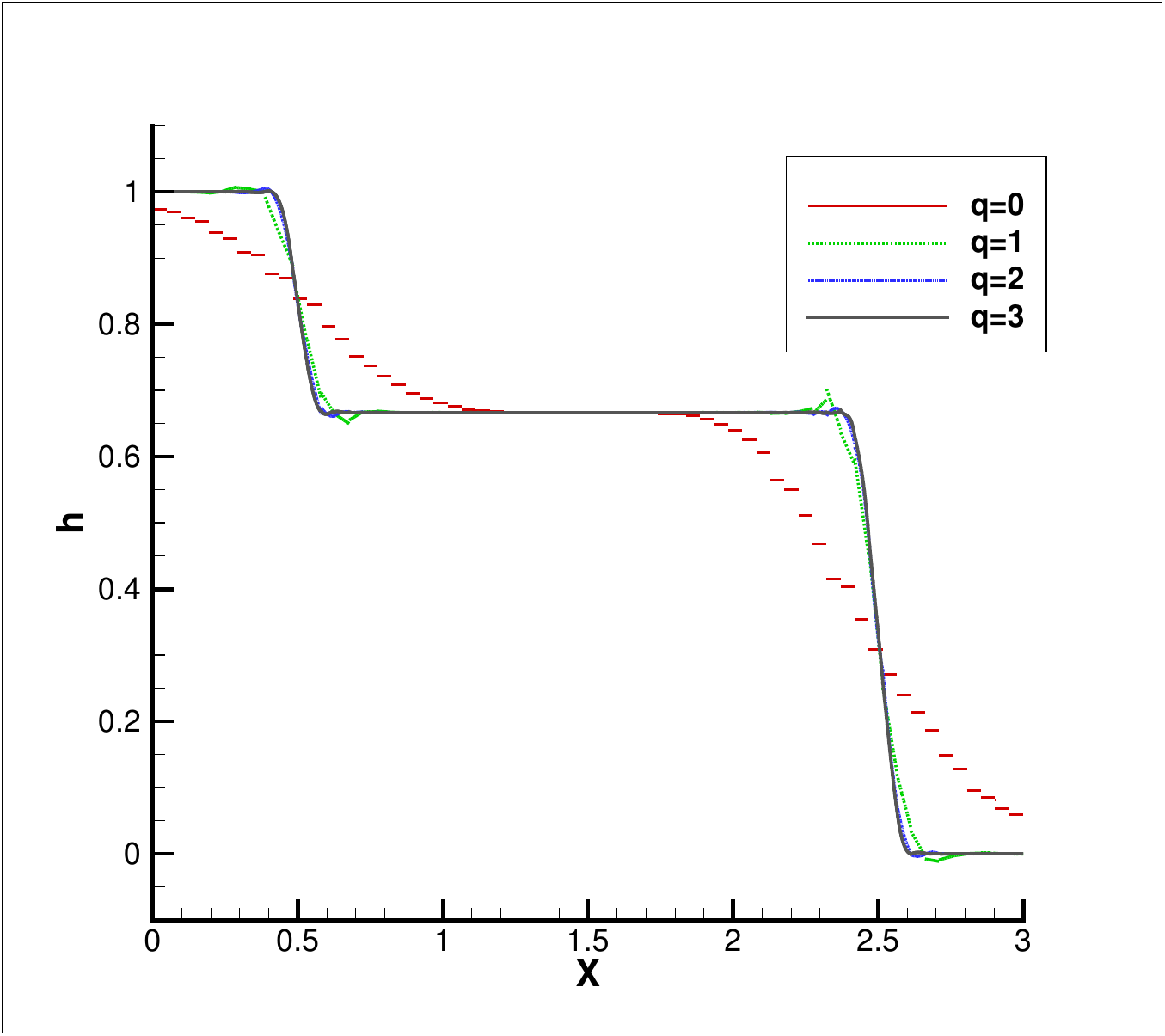}
        \caption{Different $q$, with SC}
    \end{subfigure}%
    ~ 
    \begin{subfigure}[t]{0.49\textwidth}
        \centering
        \includegraphics[width=\textwidth]{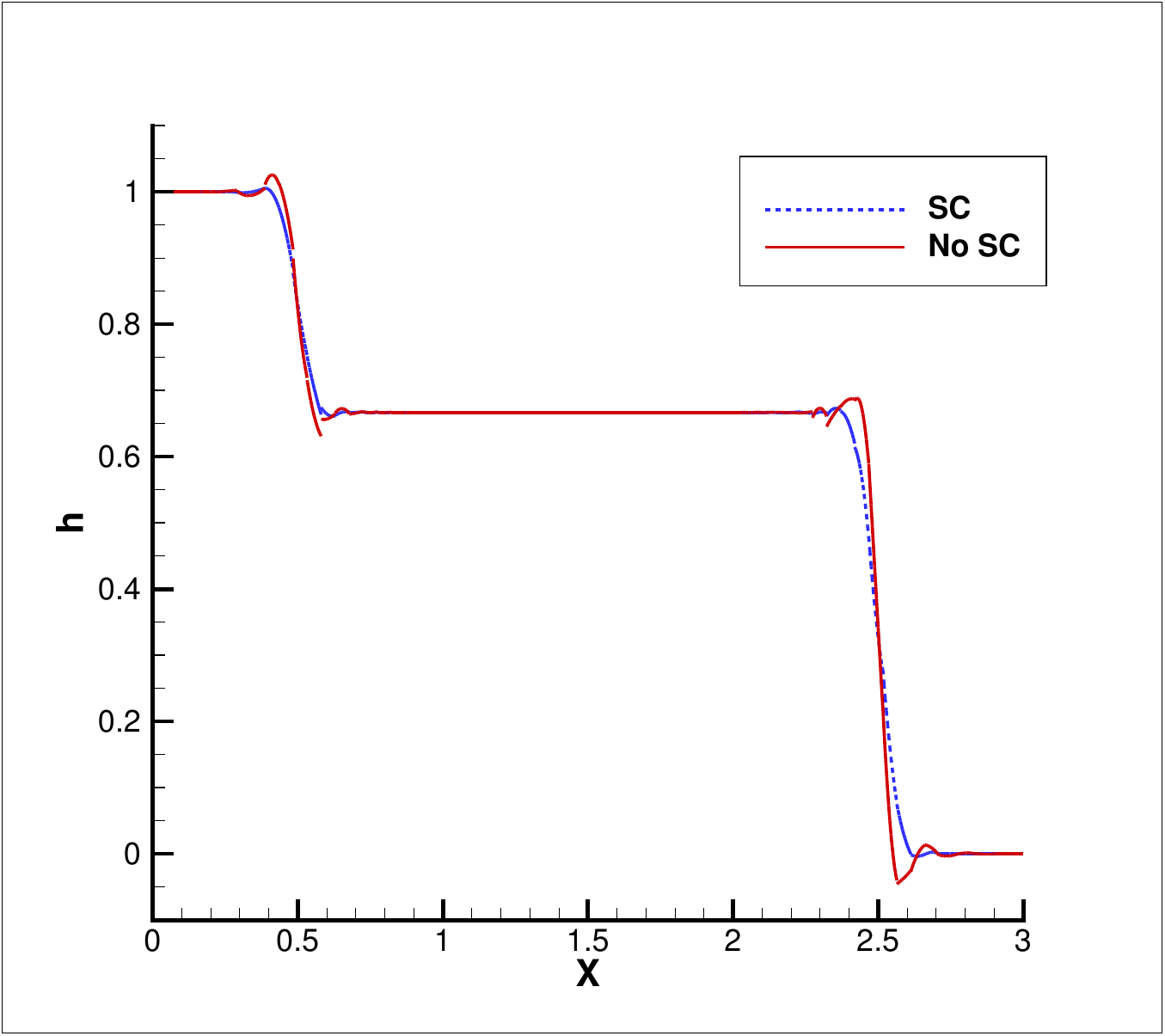}
        \caption{$q=2$, with/without SC}
    \end{subfigure}
     \caption{Wave equation, discontinuous initial data,  $h = 1/20$}
     \label{Fig::wave_disc_SC01}
\end{figure}

\begin{figure}[htbp]
    \centering
    \begin{subfigure}[t]{0.49\textwidth}
        \centering
        \includegraphics[width=\textwidth]{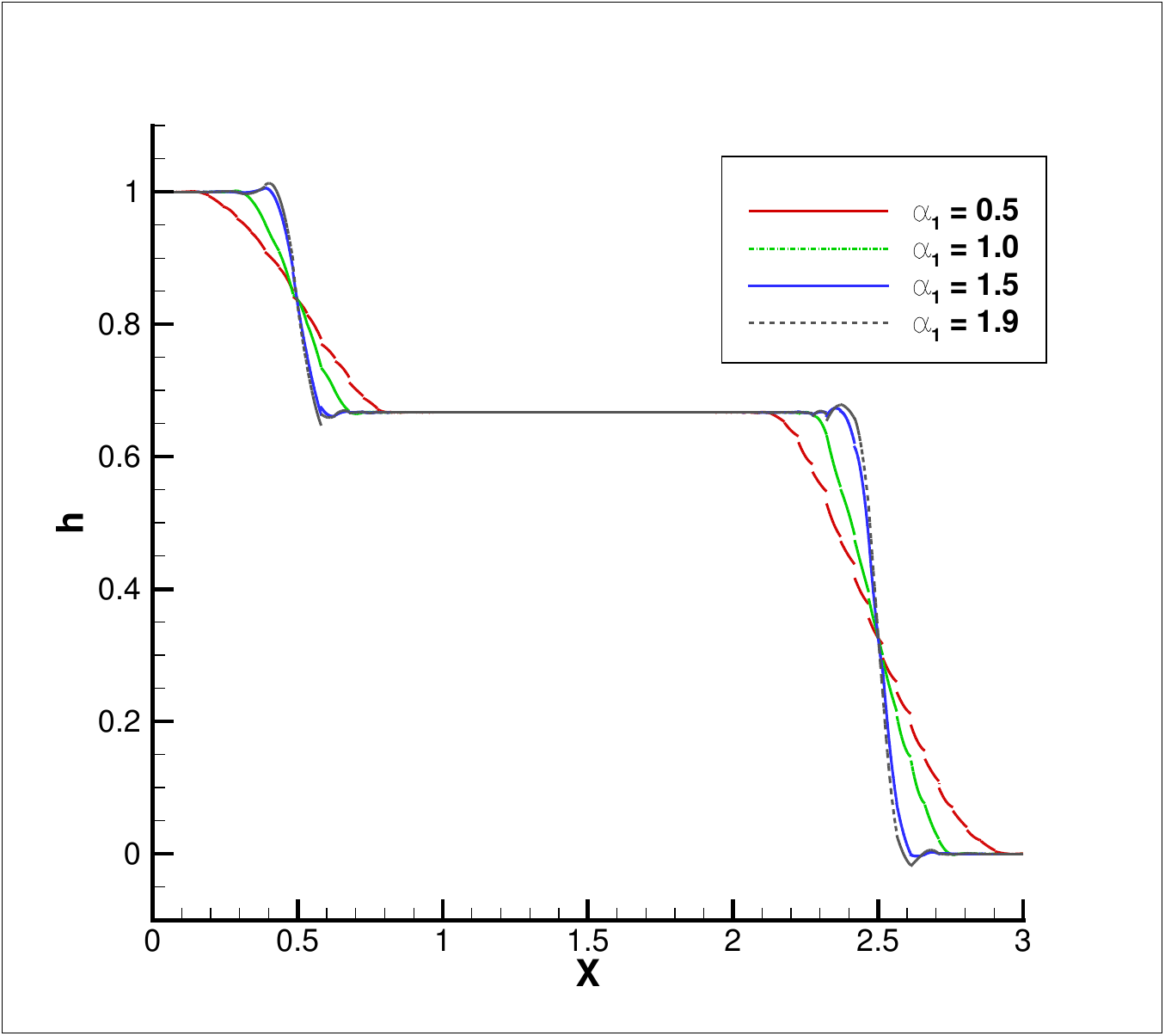}
    \end{subfigure}%
    ~ 
    \begin{subfigure}[t]{0.49\textwidth}
        \centering
        \includegraphics[width=\textwidth]{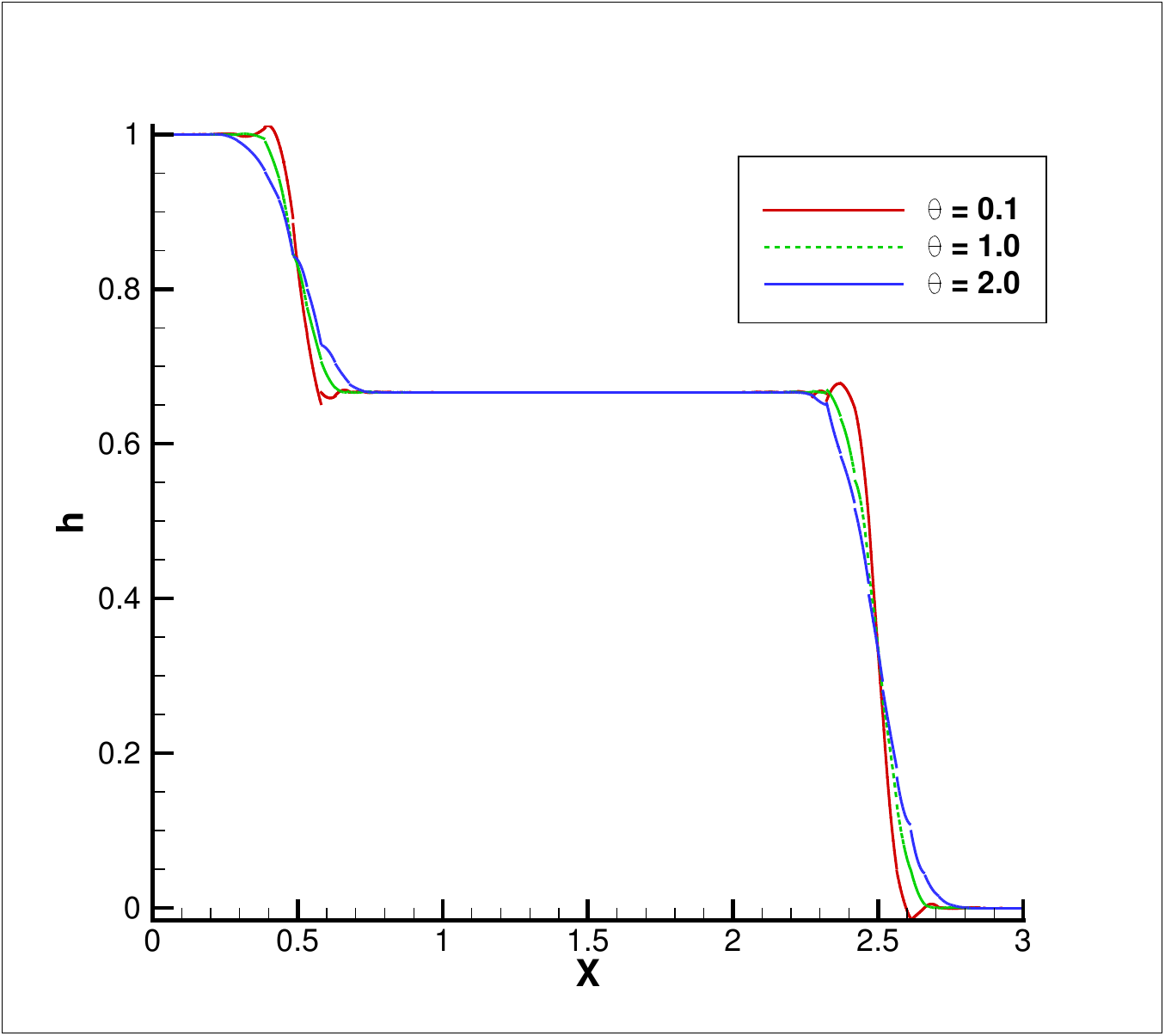}
    \end{subfigure}
     \caption{Wave equation, discontinuous initial data, $q=2$, $h = 1/20$, effect of two parameters, $\alpha_1$ (left) and $\theta$ (right)}
		\label{Fig::wave_disc_alftth}
\end{figure}
 
\subsection{Shallow water equations}
\label{sec:shalloweq}
The   shallow water equations which describe the disturbance propagation in incompressible fluids under the influence of gravity can be written as 
\begin{align}
h_t + (hu)_x &= 0, \\
(hu)_t + (h u^2 + \dfrac{1}{2} g h^2)_x	 &= 0,
\end{align}
where $h$ and $u$ are the depth and the velocity of the water, respectively and $g = 1$ is the gravity acceleration. The entropy function in this case is  defined as the total energy $U = \frac{1}{2} \left( h u^2+ g h^2\right)$. Hence, the corresponding entropy variables and entropy conservative flux can be set as in \cite{fjordholm2012arbitrarily}. Also we choose Rusanov type for the diffusion operator of the entropy stable flux. 

Moreover, we set the initial condition for \emph{dam break} problem as follows
\begin{align}
\label{Eq::wave_sw}
(h, u)|_{t=0} = \begin{cases}
(1.5,0), \quad &x < 0, \\ (1, 0), \quad &x > 0.
\end{cases}
\end{align}

We take the computational domain as ${[0, 10]}$, with Dirichlet boundary condition, and the final time is set to $T = 1$. In Figure \ref{Fig::sw}, we present the result with $q=2$ with/without shock capturing versus the exact solution calculated by SWASHES code \cite{FLD:FLD3741}.

The result and their comparison with the exact solution shows a good control of the shock with acceptable overshoot, and the shock is quite sharp.

\begin{figure}[htbp]
    \centering
    \begin{subfigure}[t]{0.49\textwidth}
        \centering
        \includegraphics[width=\textwidth]{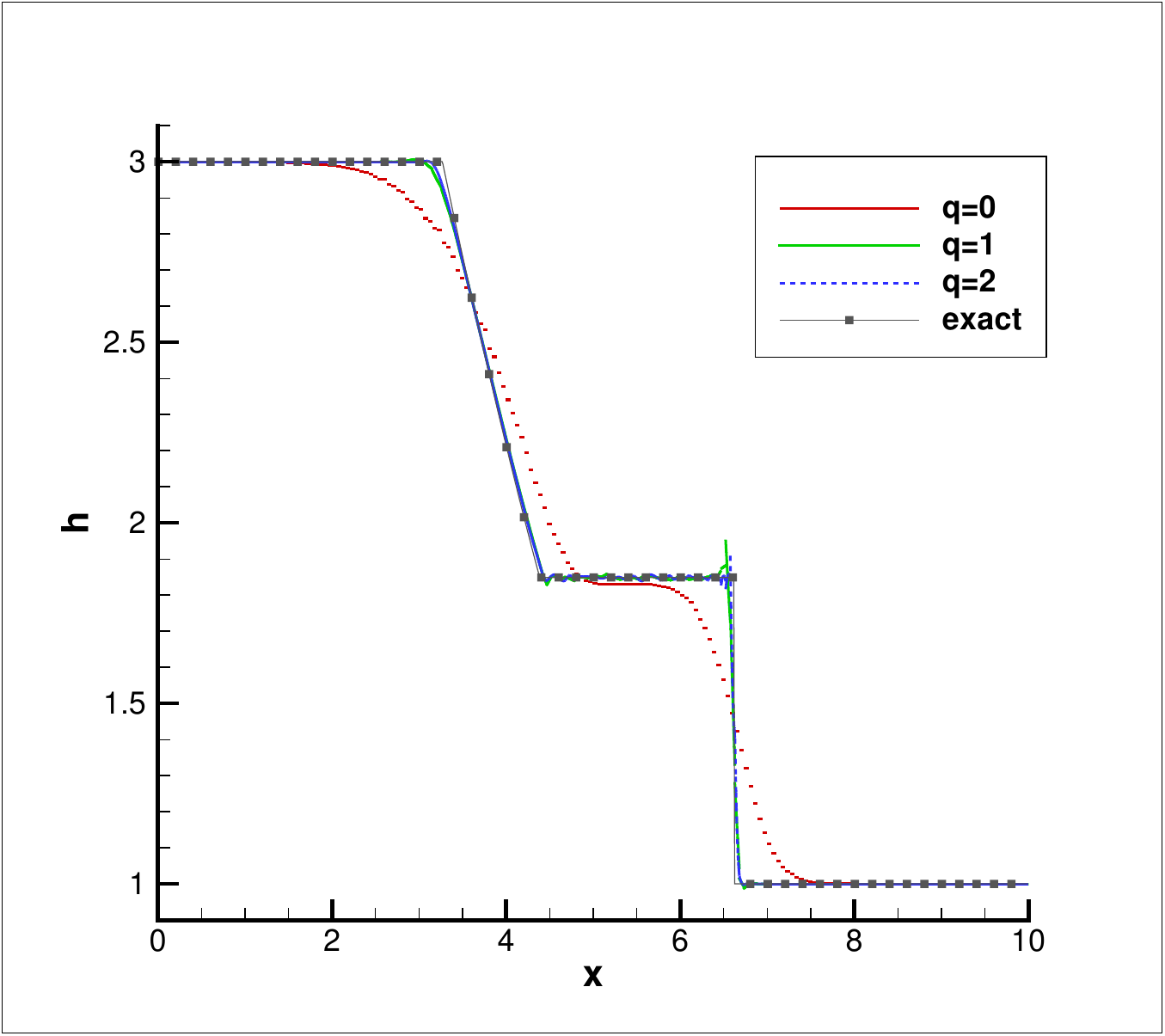}
        \caption{Different $q$, with SC}
    \end{subfigure}%
    \begin{subfigure}[t]{0.49\textwidth}
        \centering
        \includegraphics[width=\textwidth]{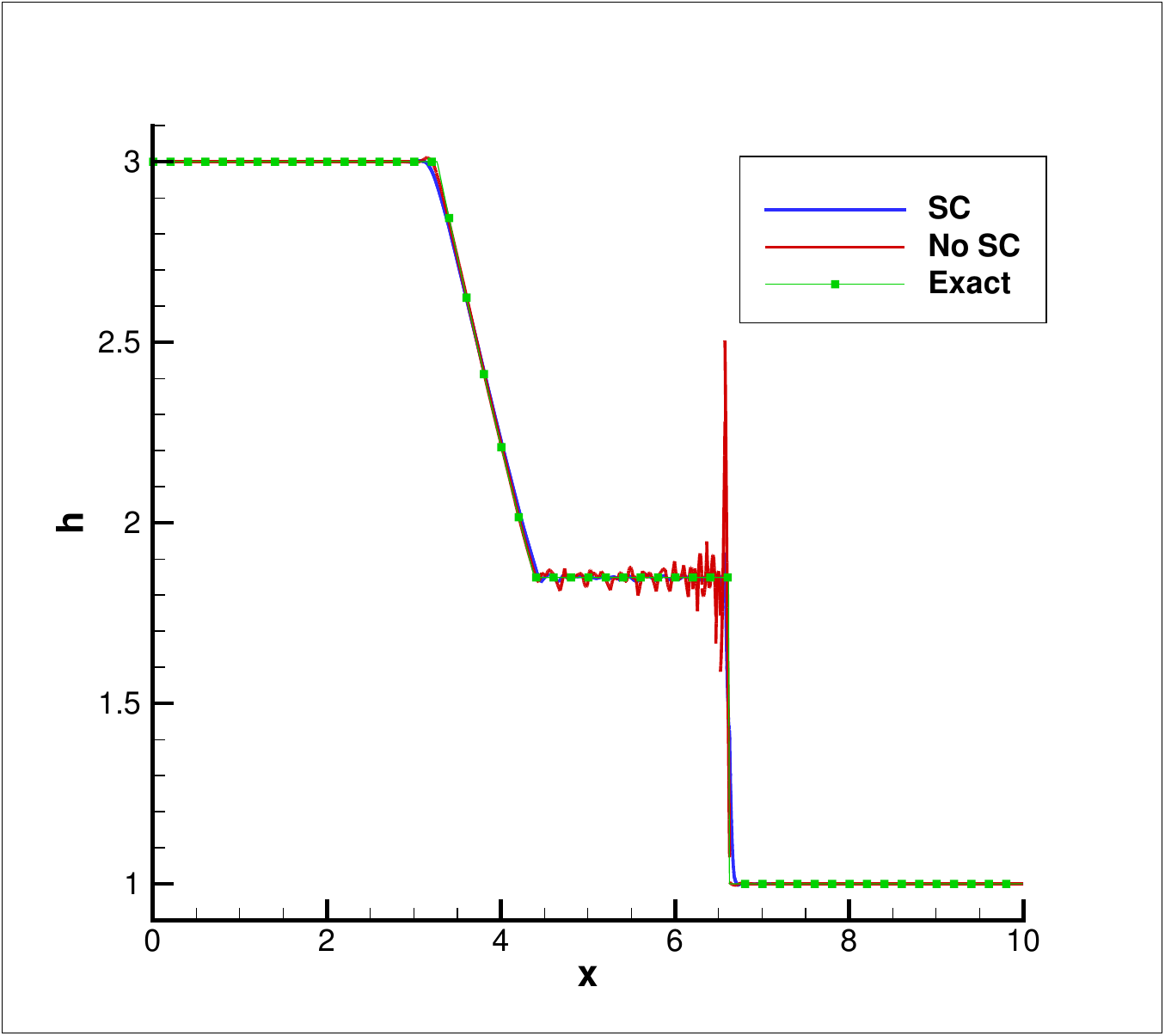}
        \caption{$q=2$, with/without SC}
    \end{subfigure}
     \caption{Dam break, height, $ h = 1/20$}
     \label{Fig::sw}
\end{figure}

\subsection{Euler Equations for polytropic gas}
\label{sec:eulereq}
The one-dimensional Euler equations  can be written as
\begin{align}
\rho_t + (\rho u)_x &= 0, \\
(\rho u)_t + (\rho u^2 + p)_x &= 0, \\
E_t + (u(E+p))_x &= 0,
\end{align}
where $\rho$, $ u $ and $E$ correspond to density, velocity and total energy of the gas, respectively. Here  $p$ is the pressure of the gas and is defined as $p = (\gamma -1) (E -\frac{1}{2 }\rho u^2)$, where $\gamma$ is the adiabatic exponent which is set to $1.4$ in all experiments here.

Following \cite{ismail2009affordable} the entropy function defined as 
$
U(\mbf{u}) = -\dfrac{\rho s}{\gamma -1 },
$
where $s$ is the specific entropy defined as $s = \ln p - \gamma \ln \rho $. The corresponding definition of entropy variables and entropy conservative flux $\mbf{f}^\star$ as well as diffusion operator is defined according to  \cite{ismail2009affordable}.
We consider two types of Riemann problems for our numerical test in the domain ${[0, 10]}$. The boundary conditions are set to Dirichlet type with the following initial conditions
\begin{equation}\label{Eq::RiemannInit}
(\rho, u, p)_{t=0} = \begin{cases}
(\rho_L, u_L, p_L) \qquad x < 5, \\ (\rho_R, u_R, p_R) \qquad x \geq 5,
\end{cases}	
\end{equation}
which we define as the right and left states for the following two cases:
\subsubsection{Sod shock tube}
Here the initial condition is in the form \eqref{Eq::RiemannInit} with the values
\begin{equation}
(\rho_L, u_L, p_L) = (1, 0, 1), \qquad (\rho_R, u_R, p_R) = (0.125, 0, 0.1). 
\end{equation}
The results are presented in Figure \ref{Fig::sod}. We observe that the presented shock capturing mechanism acts effectively near both shock waves and the contact discontinuity. Our solution compares well  to the results of \cite{hiltebrand2014entropy}, which are improved by some pressure scaling as well as streamline diffusion. Moreover while increasing polynomial degree from $q=0$ to $q=1$ significantly improves the solution quality, the quadratic polynomial solution is quite similar to the linear one, and only improves the overshoots near the shock wave. 

\begin{figure}[htbp]
    \centering
    \begin{subfigure}[t]{0.49\textwidth}
        \centering
        \includegraphics[width=\textwidth]{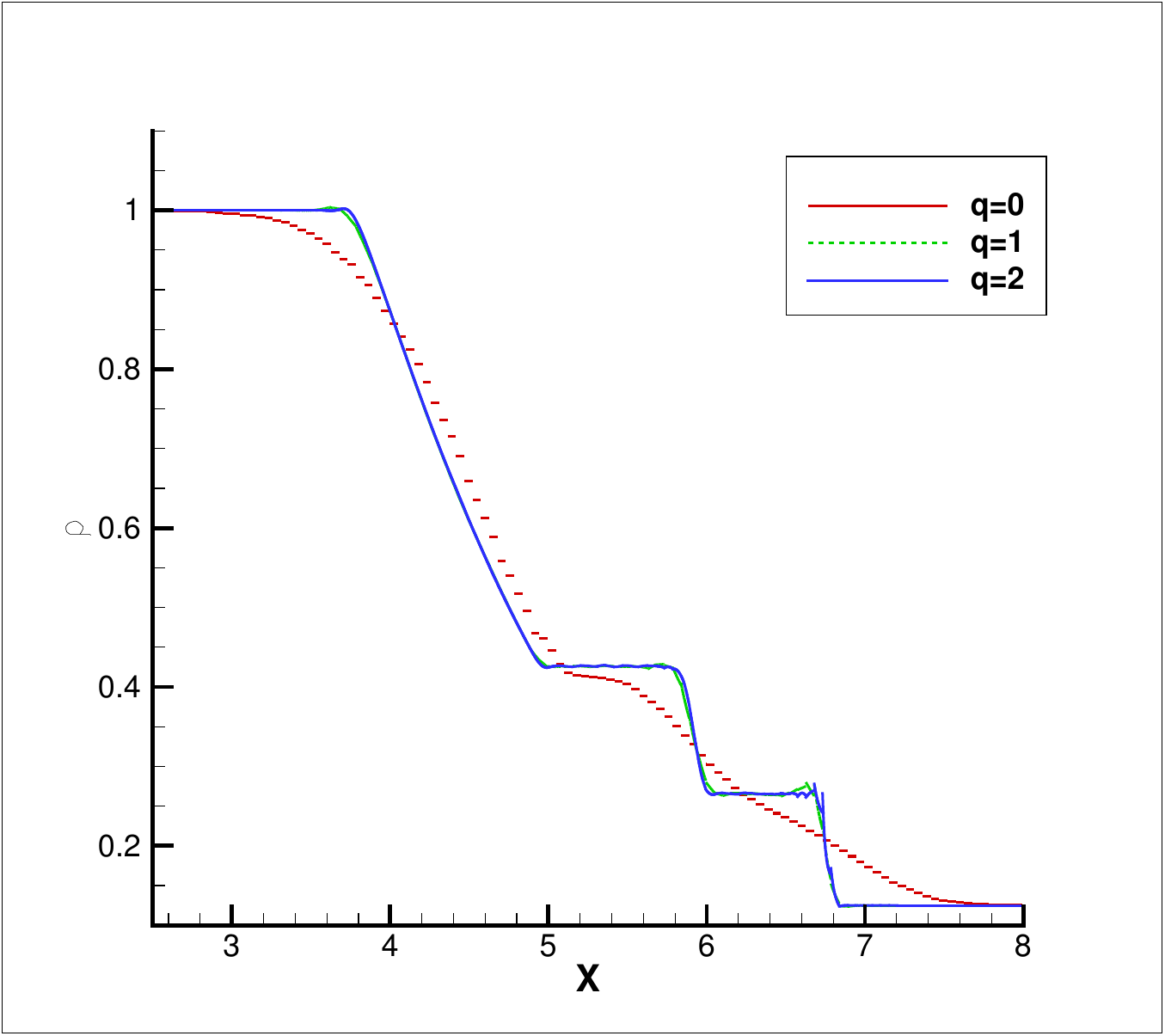}
       \caption{Different $q$, with SC}
    \end{subfigure}%
    ~ 
    \begin{subfigure}[t]{0.49\textwidth}
        \centering
        \includegraphics[width=\textwidth]{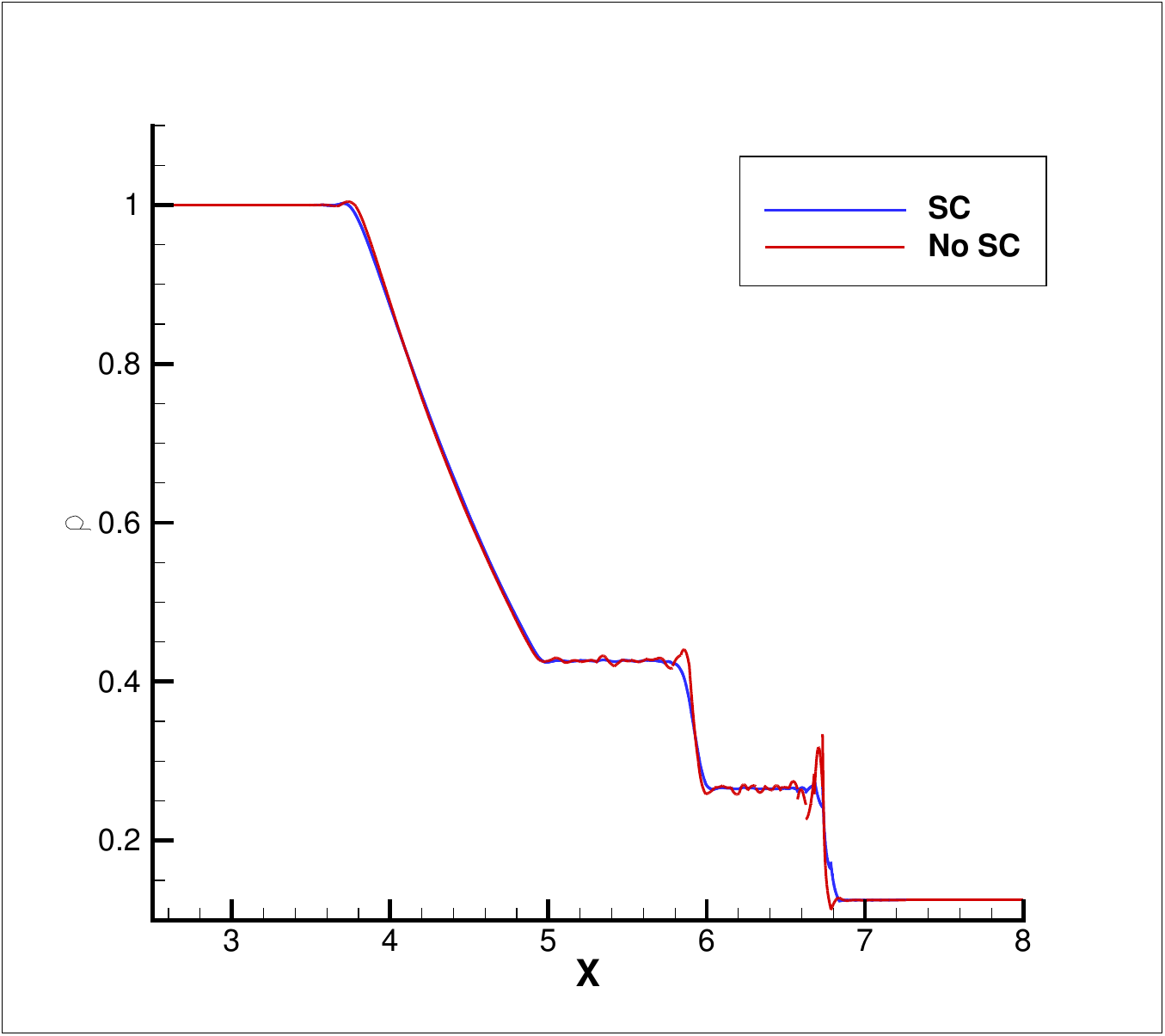}
         \caption{$q=2$, with/without SC}
    \end{subfigure}
    \caption{Sod shock tube, density, $ h = 1/20$}
	\label{Fig::sod}
\end{figure}

\subsubsection{Lax shock tube}
Here the initial condition is in the form \eqref{Eq::RiemannInit} with values
\begin{equation}
(\rho_L, u_L, p_L) = (0.445, 0.698, 3.528), \qquad (\rho_R, u_R, p_R) = (0.5, 0, 0.571). 
\end{equation}
Again, comparing results with \cite{hiltebrand2014entropy} shows that the shock capturing mechanism is effective in alleviating the oscillations. The general behaviour here is similar to the Sod case, but with larger overshoots due to the stronger shock.
\begin{figure}[htbp]
    \centering
    \begin{subfigure}[t]{0.49\textwidth}
        \centering
        \includegraphics[width=\textwidth]{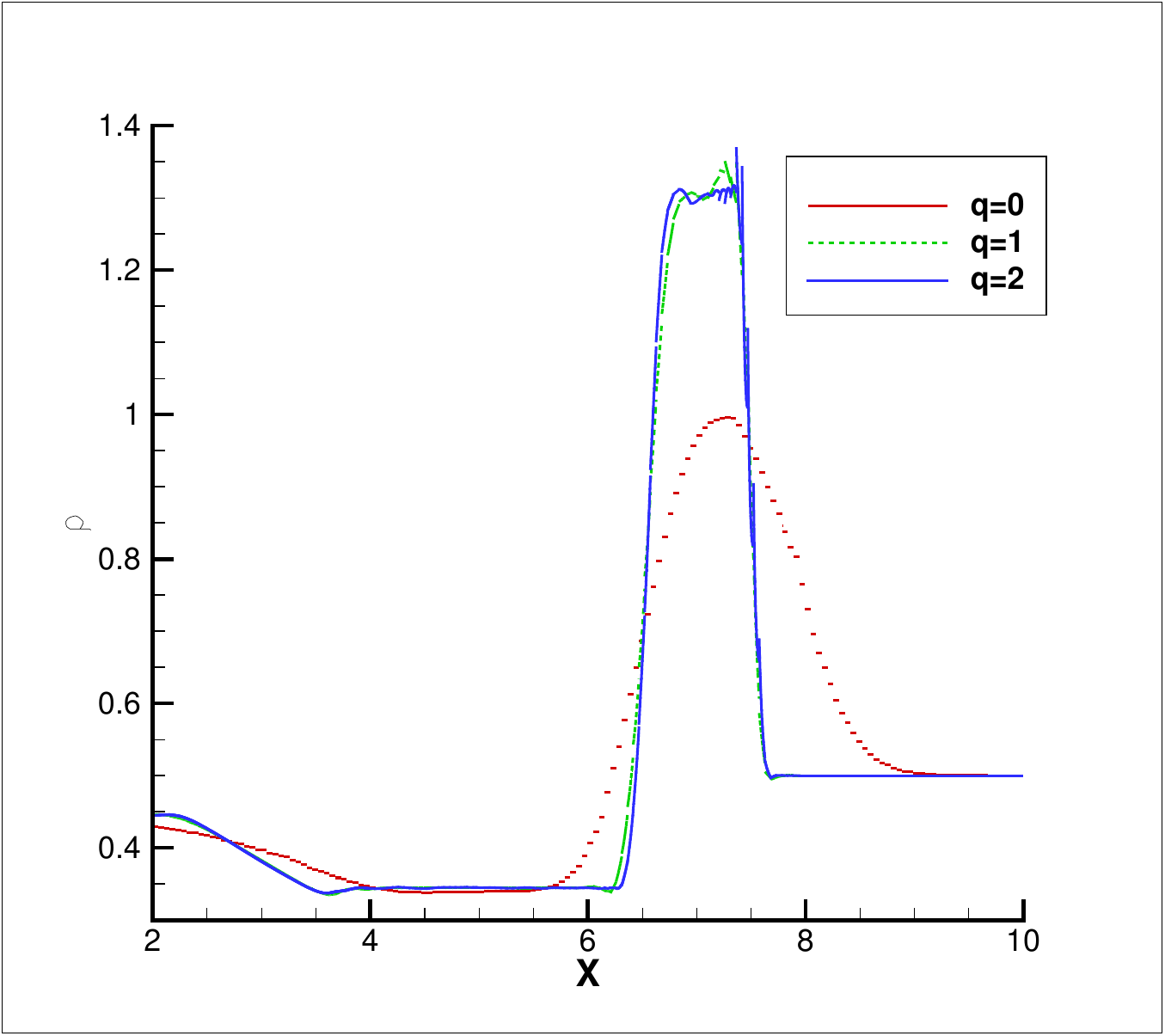}
        \caption{Different $q$, with SC}
    \end{subfigure}%
    ~ 
    \begin{subfigure}[t]{0.49\textwidth}
        \centering
        \includegraphics[width=\textwidth]{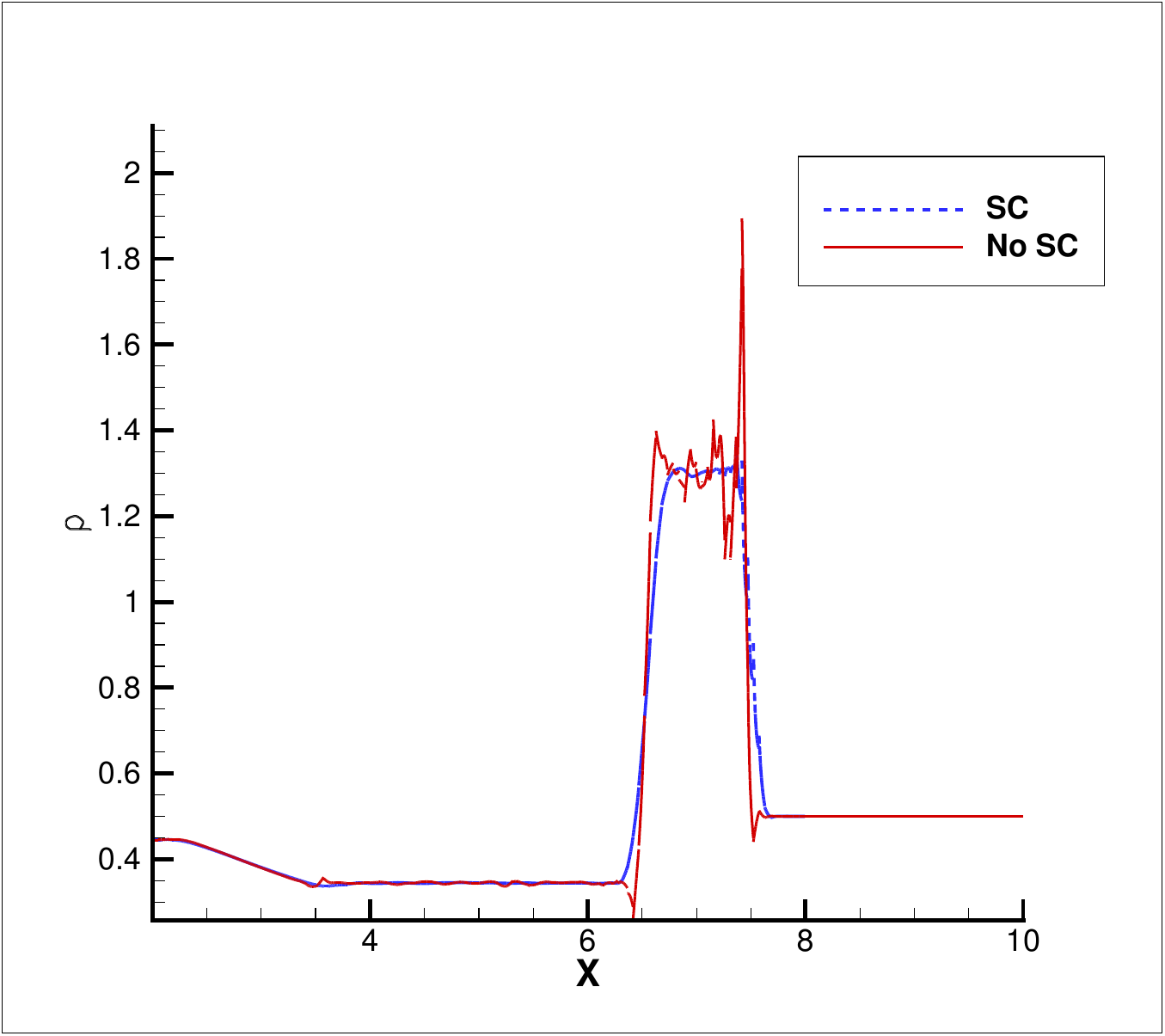}
        \caption{$q=2$, with/without SC}
    \end{subfigure}
     \caption{Lax shock tube, density, $ h = 1/20$ }
	 \label{Fig::lax}
\end{figure}

\section{Conclusion}
\label{Sec::conc}
In this work we have shown the capability of the shock capturing mechanism to ensure the convergence to entropy measure-valued solution for nonlinear systems of  conservation laws. We followed the framework presented in \cite{hiltebrand2014entropy} for streamline diffusion shock capturing discontinuous Galerkin methods, and introduced a stripped-down version by omitting the streamline diffusion term while retaining the entropy stability and convergence of the method. Also using super approximation estimates, we succeeded to `relax' the scaling in the viscosity and obtain a less diffusive method. Furthermore, the applicability of the method was presented through numerical experiments.

An improved version of our scheme might consider a dimensionally consistent  of the shock capturing operator. (See \cite{hiltebrand2014efficient} for a dimensionally consistent formulation with SD term.) This is left for future work.

\section*{Acknowledgments}
The authors thank Dr. Andreas Hiltebrand from ETHZ for his useful comments on \cite{mythesis} which is the base of this paper. Moreover the first author acknowledges Micheal Woopen for help on the numerical code.
The authors are supported by the Deutsche Forschungsgemeinschaft (German Research
Association) through grant GSC 111.

\bibliography{mybib.bib}{}
\bibliographystyle{siam}
\Appendix

\section{Proof of Lemma \ref{Lem-app-ineq} and Lemma \ref{Lem-app-bnd-ineq}}
\label{Sec::App}
Here we present the proofs of the Lemmas \ref{Lem-app-ineq} and \ref{Lem-app-bnd-ineq}.
Note that by notation $\Gamma$ we mean a $h$-dependent constant  $\Gamma = C h^\beta$, where $C$ is independent of $h$. 

Assuming that \eqref{Eq::u_v_bound} and \eqref{Eq::f_v_bound} hold and  remembering the definition  \eqref{Eq::residual}, the residual can be bounded from above as  $\vert \Res \vert \leq C \vert \nabla \vh \vert $.
Consequently  one can easily obtain 
\begin{equation}\label{Eq-app-resbarineq}
\Resb \leq  C  \Ltwonorm{ \nabla \vh}{\el}. 
\end{equation}

\subsection{Proof of Lemma \ref{Lem-app-ineq}}
\begin{enumerate}[(i)]
\item We split the summation into summations on $\kappa^>  := \{\kappa: \Ltwonorm{\nabla \vh}{\el} \geq \Gamma \}$ and $\kappa^< := \{\kappa: \Ltwonorm{ \nabla \vh}{\el} < \Gamma \}$ as
\begin{align}
h^\gamma \sumk \Resb = h^\gamma \left( 
\sum_{ \kappa \in \kappa^> } \!\! \Resb +  \sum_{\kappa \in \kappa^< } \!\! \Resb \right) = I_1 + I_2.
\end{align}
 
We estimate each of terms $I_1$ and $I_2$ separately:
\begin{itemize}
\item Bound on $I_1$: 
Remembering the definition of $\epsk$ in \eqref{Eq-D_SC} gives
\begin{align}
I_1 &\leq  C h^{\gamma-\alpha_1} \!\!\! \sum_{\kappa \in  \kappa ^>}  \!\epsk \Ltwonorm{ \nabla \vh}{\el}  + h^{\gamma-\alpha_1} \!\!\sum_{\kappa \in  \kappa^>} \! \epsk  h^\theta \nonumber \\
& \leq C \frac{h^{\gamma-\alpha_1}}{\Gamma} \sumk \epsk \Ltwonorm{ \nabla \vh}{\el}^2 + \frac{h^{\gamma-\alpha_1+\theta}}{\Gamma^2} \sumk \epsk \Ltwonorm{ \nabla \vh}{\el}^2  \nonumber \\
& \leq  C \left(\frac{h^{\gamma-\alpha_1}}{h^\beta} + \frac{h^{\gamma-\alpha_1+\theta}}{h^{2\beta}} \right),
\end{align}
where \eqref{Eq::temp-corr} is used in the last estimate. 
\item Bound on $I_2$: Using \eqref{Eq-app-resbarineq} one can show  that  $\Resb < C \Gamma $ holds where $\Ltwonorm{ \nabla \vh}{\el} \leq \Gamma$ and consequently 
\begin{equation}
I_2 < C h^{\gamma+\beta} \left( \sumk 1 \right)  \leq C h^{-d'} h^{\gamma+\beta},
\end{equation}
where the term $h^{-d'}$ stands for the number of all space-time elements in the domain which is true thanks to the quasi-uniformity condition \eqref{Eq-uniformity}.
\end{itemize}
Considering bounds on $I_1$ and $I_2$, yields
\begin{equation}\label{Eq::hienq1-final}
h^\gamma \sumk \Resb \leq C \left(h^{\gamma-\alpha_1 - \beta} + h^{\gamma-\alpha_1+\theta - 2\beta} +  h^{\gamma+\beta-d'} \right).
\end{equation}
For \eqref{Eq::hienq1-final} to be bounded (regarding to $h$) it is required that 
\begin{subequations}\label{Eq-hineq-1}
\begin{align}
\gamma-\alpha_1 - \beta &\geq 0, \label{Eq-hineq-1-1} \\
\gamma-\alpha_1 + \theta - 2 \beta &\geq 0, \label{Eq-hineq-1-2}\\
\gamma + \beta - d' &\geq 0. \label{Eq-hineq-1-3} 
\end{align}
\end{subequations}
If one can find a possible value for (here the only) free parameter $\beta$, then \eqref{Eq::hienq1-final}  is bounded by initial condition implied in $C$ and the diameter of the space-time domain. Also this bound goes to zero as $h \to 0$ in the case of strict inequality.

Considering \eqref{Eq-hineq-1-1} and \eqref{Eq-hineq-1-3} gives
\begin{equation} \label{Eq::temp-lem-app-1}
d' - \gamma \leq \beta \leq \gamma -\alpha_1,
\end{equation} 
which implies  $\gamma \geq \frac{d' + \alpha_1}{2}$. A similar calculation using  \eqref{Eq-hineq-1-2} and \eqref{Eq-hineq-1-3} leads to the condition $\gamma \geq \frac{2d' + \alpha_1 - \theta}{3}$. Using the condition on $\theta$ in \eqref{Eq::tetabound}, one can check that the second condition reduces to the first one, and we only need to satisfy $\gamma \geq \frac{d' + \alpha_1}{2}$.  This completes the proof of part (i) of Lemma \ref{Lem-app-ineq}. 

Note that the maximum rate of convergence with respect to $h$ occurs when  $\beta = \theta = \frac{d' - \alpha_1}{2}$. For this choice all terms in brackets on the right hand side of \eqref{Eq::hienq1-final}  reduce to  $h^{\gamma - \frac{\alpha_1 + d'}{2}}$.
\item We show that we can find a uniform upper bound in case of $\gamma = \alpha_1$. Then the theorem is obviously true for $\gamma > \alpha_1$. 

From the \eqref{Eq::visc_bound} and the definition of $\epsk$ in \eqref{Eq-D_SC} we have
\begin{equation}\label{Eq::Lemma-app2-a}
\sumk \frac{h^{\alpha_1} \Resb \Ltwonorm{\nabla \vh}{\el}^2 }{\Ltwonorm{\nabla \vh}{\el} +  h^\theta } \leq C.   
\end{equation}
Using the arguments of Lemma \ref{Lem-app-ineq} with $\gamma = \theta + \alpha_1$, we can claim that
\begin{equation}\label{Eq::Lemma-app2-b}
\sumk h^{\alpha_1} h^\theta \Resb \leq C,   
\end{equation}
if  $\theta + \alpha_1 \geq \frac{d' + \alpha_1}{2}$, i.e.  $\theta \geq \frac{d' - \alpha_1}{2}$. This is true by condition \eqref{Eq::tetabound}.

Now, one should note that 
\begin{equation}\label{Eq-Prop-Hilte}
\Ltwonorm{\nabla \vh}{\el} \leq \max \{h^\theta, \frac{2 \Ltwonorm{\nabla \vh}{\el}^2 }{\Ltwonorm{\nabla \vh}{\el} + h^\theta} \},
\end{equation}
which can be easily seen by a graphical argument. 
Using \eqref{Eq::Lemma-app2-a} and \eqref{Eq::Lemma-app2-b} combined with \eqref{Eq-Prop-Hilte} yields
\begin{equation*}	
h^{\alpha_1} \sumk \Resb \Ltwonorm{\nabla \vh}{\el} \leq h^{\alpha_1} \sumk \Resb \max \{h^\theta, \frac{2 \Ltwonorm{\nabla \vh}{\el} ^2 }{\Ltwonorm{\nabla \vh}{\el} + h^\theta} \} \leq C.
\end{equation*}

The bound $C$ vanishes as $h\to 0$ if $\gamma > \alpha_1$.
\end{enumerate}
 
\subsection{Proof of Lemma \ref{Lem-app-bnd-ineq}}

\begin{enumerate}[(i)]
\item 
Using~\eqref{Eq::temp-corr}, \eqref{Eq::D_bound} and the definition of $\BResb$ \eqref{Eq-BRes}, one can conclude that the first and last term of $\sum \BResb^2$  are bounded. 
For the second term, using the definition of the entropy conservative flux \eqref{Eq::entrp-cnsrv-flux} and its consistency yields 
\begin{align}\label{Eq::boundred_proof}
\flent - \ff(\vh_{K, -}) \cdot \mbf{n} &= \ff^\star (\vh_{K, -}, \vh_{K, +}; \mbf{n}) - \ff^\star (\vh_{K, -}, \vh_{K, -}; \mbf{n}) \nonumber \\ &= \int_0^1 \ff^k(\vh(\theta)) - \ff^k (\vh_{K,-})  \df{\theta}  \nonumber \\
&= \int^1_0 \theta \sum_{i=1}^m  a_i \ff^k_\vv (\mbf{b}_i(\theta)) \df{\theta} \, \jump{ \vh_K} 
\end{align} 
with coefficients $a_i \in [0, 1]$ such that $\sum
_{i=1}^m a_i =1$ and $\mbf{b}_i(\theta)$s are some values on the straight line connecting $\vh_{K, -}$ and $\vh(\theta)$. The value $\vh(\theta)$ is defined by the parameterization introduced in \eqref{Eq::barth-param}.  The last identity is the result of the mean value theorem for a vector-valued function (cf.\ e.g.\ \cite{mcleod1965mean}).

By assuming the boundedness as in \eqref{Eq::f_v_bound}, \eqref{Eq::boundred_proof} can be bounded from above by
\begin{equation}\label{Eq::Lip_assump}
\vert \flent - \ff(\vh_{K, -}) \cdot \mbf{n} \vert \leq C \vert \jump{\vh_K} \vert.
\end{equation}
The  estimate~\eqref{Eq::temp-corr}   combined with \eqref{Eq::Lip_assump} leads to
\begin{equation}
\sumk \BResb^2\leq C,
\end{equation}
and the proof completes with recalling that $\ds h^\frac{d'}{2} \sumk \BResb \leq C \big( \sumk \BResb^2 \big)^{1/2}$.

\item The proof   proceeds along the same lines as the proof of part (ii) of Lemma \ref{Lem-app-ineq}, by using \eqref{Eq::visc_bound} and  the uniform bound presented in
part (i). 

In the proof it is needed to have $\theta + \alpha_2 \geq \frac{d'}{2}$ which implies $\theta \geq \frac{d'}{2} - \alpha_2$. This is the requirement on the regularization parameter $\theta$ in \eqref{Eq::tetabound}. 
\end{enumerate}

\end{document}